\newcommand{\prn}[1]{\left(#1\right)}
\newcommand{\pd}[2]{\frac{\partial#1}{\partial#2}}
\renewcommand{\vec}[1]{\mbox{\boldmath$#1$}}
\theoremstyle{plain}
\newtheorem{thm}{Theorem}
\newtheorem{cor}{Corollary}
\theoremstyle{definition}
\newtheorem{defn}{Definition}
\newtheorem{ex}{Example}
\theoremstyle{remark}
\newtheorem{rem}{Remark}
\begin{document}

\title{Minimal Positive Stencils in Meshfree Finite Difference Methods
for the Poisson Equation}
\author{
Benjamin Seibold \\
Department of Mathematics \\
Massachusetts Institute of Technology \\
77 Massachusetts Avenue \\
Cambridge MA 02139, USA}
\maketitle

\begin{abstract}
Meshfree finite difference methods for the Poisson equation approximate the Laplace
operator on a point cloud. Desirable are positive stencils, i.e.~all neighbor entries
are of the same sign. Classical least squares approaches yield large stencils that are
in general not positive. We present an approach that yields stencils of minimal size,
which are positive. We provide conditions on the point cloud geometry, so that positive
stencils always exist. The new discretization method is compared to least squares
approaches in terms of accuracy and computational performance.
\end{abstract}

\section{Introduction}
The numerical approximation of the Poisson equation is a fundamental task encountered
in many applications. Often it appears as a subproblem in a more complex computation,
for instance as a projection step in the simulation of incompressible
flows \cite{Chorin1968}. Finite difference methods approximate the equation on a finite
number of points. If the points can be placed on a regular grid, the approximation is
simple and yields symmetric matrices. However, in many cases a regular point
distribution is not possible or desired. Examples are the explicit representation of
complex geometries, or the point positions may be given by the application, for instance
from scattered measurements or in particle methods \cite{KuhnertTiwariMeshfree2002}.
If the points are distributed irregularly, neighborhood relations have to be
established. This could be done by constructing a mesh. However, meshing can be
costly, and thus may not be desired in applications with time-dependent geometries.
Instead, meshfree neighborhood criteria can be defined, and meshfree finite difference
stencils constructed. Consistency conditions for stencils are derived in
Sect.~\ref{seibold:sec:fd_poisson}. We are interested in approaches that yield
M-matrices, as explained in Sect.~\ref{seibold:sec:m_matrix_structure}.
This requires the stencils to be positive. Least squares approaches, outlined
in Sect.~\ref{seibold:sec:least_squares_approaches}, in general fail to yield
positive stencils.
In Sect.~\ref{seibold:sec:lp_approach} we present a new approach, based on sign
constrained linear minimization, that yields positive stencils.
In Sect.~\ref{seibold:sec:lp_conditions_positive_stencil} we provide conditions
on the point cloud geometry, so that positive stencils are guaranteed to exist.
Further conditions, derived in Sect.~\ref{seibold:sec:lp_matrix_connectivity},
ensure an M-matrix structure.
In Sect.~\ref{seibold:sec:numerics} the new approach is compared to classical methods
by numerical experiments.

\section{Meshfree Finite Differences for the Poisson Equation}
\label{seibold:sec:fd_poisson}
Consider the Poisson equation to be solved inside a domain
$\Omega\subset\mathbb{R}^d$
\begin{equation}
\begin{cases}
-\Delta u = f &\mathrm{in~}\Omega \\
        u = g &\mathrm{on~}\Gamma_D \\
        \frac{\partial u}{\partial n} = h &\mathrm{on~}\Gamma_N
\end{cases}
\label{seibold:eq:poisson_equation}
\end{equation}
where $\Gamma_D\cup\Gamma_N=\partial\Omega$.
Let a point cloud $X=\{\vec{x}_1,\dots,\vec{x}_n\}\subset\overline{\Omega}$ be given,
which consists of interior points $X_i\subset\Omega$ and boundary points
$X_b\subset\partial\Omega$. The point cloud is meshfree, i.e.~no information about
connection of points is provided. Meshfree finite difference approaches convert
problem \eqref{seibold:eq:poisson_equation} into a linear system
\begin{equation}
A\cdot\vec{\hat u} = \vec{\hat f}\;,
\label{seibold:eq:poisson_system}
\end{equation}
where the vector $\vec{\hat u}$ contains approximations to the values $u(\vec{x}_i)$.
The $i$-th row of the matrix $A$ consists of the \emph{stencil} corresponding to the
point $\vec{x}_i$. We assume that \eqref{seibold:eq:poisson_equation} admits a unique
smooth solution.

\subsection{Consistent Derivative Approximation}
\label{seibold:subsec:consistent_derivatives}
Consider a function $u\in C^2(\Omega\subset\mathbb{R}^d,\mathbb{R})$. We wish to
approximate $\Delta u(\vec{x}_0)$ using the function values of a finite number
of points in a circular neighborhood
$(\vec{x}_0,\vec{x}_1,\dots,\vec{x}_m)\in B(\vec{x}_0,r)$, where
$B(\vec{x}_0,r) = \{\vec{x}\in\overline{\Omega}:\|\vec{x}-\vec{x}_0\|<r\}$.
Define the distance vectors $\vec{\bar x}_i = \vec{x}_i-\vec{x}_0 \ \forall i=0,\dots,m$.
The function value at each neighboring point $u(\vec{x}_i)$ can be expressed by
a Taylor expansion
\begin{equation*}
u(\vec{x}_i) = u(\vec{x}_0)+\nabla u(\vec{x}_0)\cdot\vec{\bar x}_i
+\tfrac{1}{2}\nabla^2 u(\vec{x}_0):\prn{\vec{\bar x}_i\cdot\vec{\bar x}_i^T}+e_i\;.
\end{equation*}
We use the matrix scalar product $A:B=\sum_{i,j}A_{ij}B_{ij}$. The error in the
expansion is of order $e_i = O(r^3)$. A linear combination with coefficients
$(s_0,\dots,s_m)$ equals
\begin{align*}
\sum_{i=0}^m s_i u(\vec{x}_i) =
u(\vec{x}_0)\prn{\sum_{i=0}^m s_i}
+\nabla u(\vec{x}_0)\cdot\prn{\sum_{i=1}^m s_i\vec{\bar x}_i}\;\; \\
+\nabla^2 u(\vec{x}_0):\prn{\frac{1}{2}\sum_{i=1}^m
s_i\prn{\vec{\bar x}_i\cdot\vec{\bar x}_i^T}}
+\prn{\sum_{i=1}^m s_ie_i}\;.
\end{align*}
This approximates the Laplacian,
i.e.~$\sum_{i=0}^m s_i u(\vec{x}_i) = \Delta u(\vec{x}_0)+O(r^3)$,
if exactness for constant, linear and quadratic functions is satisfied
\begin{equation}
\sum_{i=0}^m s_i = 0 \quad , \quad
\sum_{i=1}^m\vec{\bar x}_i s_i = 0 \quad , \quad
\sum_{i=1}^m\prn{\vec{\bar x}_i\cdot\vec{\bar x}_i^T}s_i = 2I\;.
\label{seibold:eq:constraints_laplace}
\end{equation}
\begin{defn}
\label{seibold:def:consistency}
A stencil $(s_0,\dots,s_m)$ to a set of points
$(\vec{x}_0,\vec{x}_1,\dots,\vec{x}_m)\in B(\vec{x}_0,r)$ is called \emph{consistent}
(with the Laplace operator), if the constraints \eqref{seibold:eq:constraints_laplace}
are satisfied.
\end{defn}
The linear and quadratic constraints in \eqref{seibold:eq:constraints_laplace} can be
formulated as a linear system of equations
\begin{equation}
V\cdot\vec{s} = \vec{b}\;,
\label{seibold:eq:linear_system}
\end{equation}
where $V\in\mathbb{R}^{k\times m}$ is the Vandermonde matrix given
by $\vec{\bar x}_1,\dots,\vec{\bar x}_m$, and $\vec{s}\in\mathbb{R}^m$ is the
stencil vector. In 2d, with $\vec{\bar x}_i=(\bar x_i,\bar y_i)$, the system reads as
\begin{equation}
V = \prn{\begin{array}{ccc}
\bar x_1 & \dots & \bar x_m \\
\bar y_1 & \dots & \bar y_m \\
\bar x_1\bar y_1 & \dots & \bar x_m\bar y_m \\
\bar x_1^2 & \dots & \bar x_m^2 \\
\bar y_1^2 & \dots & \bar y_m^2
\end{array}}
\ , \quad
\vec{b} = \prn{\begin{array}{c}
0 \\ 0 \\ 0 \\ 2 \\ 2
\end{array}}\;.
\label{seibold:eq:der_consistent_system_2d}
\end{equation}
The number of constraints is $k = \frac{d(d+3)}{2}$.
The constant constraint in \eqref{seibold:eq:constraints_laplace} yields
$s_0 = -\sum_{i=1}^m s_i$.
Neumann boundary points can be treated in a similar manner.
Approximating $\pd{u}{\vec{n}}(\vec{x}_0)$ by $\sum_{i=0}^m s_i u(\vec{x}_i)$
leads to the constraints
\begin{equation}
\sum_{i=0}^m s_i = 0 \quad , \quad
\sum_{i=1}^m\vec{\bar x}_i s_i = \vec{n}\;.
\label{seibold:eq:constraints_Neumann}
\end{equation}
For each point, a meshfree finite difference approximation consists of two steps:
First, define which points are its neighbors. Typically, more neighbors than
constraints are chosen. Second, select a stencil.
If \eqref{seibold:eq:constraints_laplace} is underdetermined, a minimization problem
is formulated to select a unique stencil.

A set of neighbors around a central point is called in \emph{general configuration},
if the Vandermonde matrix $V$ has full rank. For $m$ neighboring points, one has no
solution if $m<k$, infinitely many solutions if $m>k$, and one solution if $m=k$.
In this case, the stencil $\vec{s}=V^{-1}\cdot\vec{b}$ can be computed by
elimination, or by formulas for the determinant of a multivariate Vandermonde
matrix \cite{Lorentz1992}.
If the points are not in general configuration, e.g.~for regular grids, the above
rules may fail. A solution can exist for $m<k$ (e.g.~5-point stencil in 2d), and no
solution may exist for $m>k$ (see example in \cite[p.~59]{SeiboldDiss2006}).
The concept of general configuration is unhandy and too strict. Solutions may be
acceptable, even if $V$ does not have full rank. The geometric condition presented
in Sect.~\ref{seibold:sec:lp_conditions_positive_stencil} ensures the existence of
stencils.

\subsection{Minimal and Positive Stencils}
\label{seibold:subsec:min_pos_stencils}
\begin{defn}
\label{seibold:def:minimal_stencil}
A consistent stencil $(s_0,\dots,s_m)$ is called \emph{minimal}, if $m\le k$.
\end{defn}
Minimal stencils are beneficial for the sparsity of the system matrix, resulting
in a lower memory consumption and a faster solution of system
\eqref{seibold:eq:poisson_system}. The total number of neighboring points is
proportional to the effort of applying the matrix to a vector, which is proportional
to the time for one step of a (semi-)iterative linear solver.
Of course, the few neighbors have to be chosen wisely, to preserve good convergence
rates of iterative solvers. The results in \cite{SeiboldMeshfree2007,SeiboldECMI2008}
indicate that this is the case with the presented approach.
\begin{rem}
\label{seibold:rem:no_continuous_dependence}
For minimal stencils, it is impossible that the stencil values depend continuously
on the point positions. Consider six points around a central point (in 2d), five of
which are selected neighbors. Consider a continuous movement of one of the neighbors
and the sixth point, such that at the end these two points have swapped their
positions, without them ever being in the same place. At some instance during this
movement, the sixth point has to become a neighbor, resulting in a jump in the
stencil values.
\end{rem}
\begin{rem}
If used in a particle method, the lack of smoothness in minimal stencils
(Rem.~\ref{seibold:rem:no_continuous_dependence}) may lead to a non-conservative
scheme. In an isolated Poisson solver and in particle methods that are not
conservative by construction (such as the finite pointset method
\cite{KuhnertTiwariMeshfree2002}) the advantage of optimal sparsity often
outweighs this drawback.
\end{rem}
\begin{defn}
\label{seibold:def:positive_stencil}
A consistent stencil $(s_0,\dots,s_m)$ is called \emph{positive}, if \linebreak
$s_1,\dots,s_m\ge 0$. Due to \eqref{seibold:eq:constraints_laplace} and
\eqref{seibold:eq:constraints_Neumann}, this implies for the central point $s_0<0$.
\end{defn}
Positive stencils yields the system matrix in \eqref{seibold:eq:poisson_system} to be
an L-matrix (Def.~\ref{seibold:def:L_matrix}), which gives rise to an M-matrix
structure (see Sect.~\ref{seibold:sec:m_matrix_structure}).
The desirability of positive stencils has been pointed out
by Demkowicz, Karafiat and Liszka \cite{DemkowiczKarafiatLiszka1984}.
Classical approaches do in general not yield positive stencils.
An ``optimal star selection'' \cite{DuarteLiszkaTworzyako1996} makes positive stencils
likely, but they are not guaranteed
(see Fig.~\ref{seibold:fig_neighborhood_four_quadrant}).
F{\"u}rst and Sonar derive topological conditions on point clouds for positive least
squares stencils in 1d \cite{FuerstSonar2001}.
In Sect.~\ref{seibold:sec:lp_approach} we present a strategy that approximates the
Poisson equation \eqref{seibold:eq:poisson_equation} on a point cloud by minimal
positive stencils.
In Sect.~\ref{seibold:sec:lp_conditions_positive_stencil} conditions on a point
cloud are presented (in 2d and 3d) which guarantee the existence of positive stencils.

\section{M-Matrices}
\label{seibold:sec:m_matrix_structure}
Meshfree finite difference matrices are in general non-symmetric. Consider two points
$\vec{x}_i$ and $\vec{x}_j$, each being a neighbor of the other, and a third point
$\vec{x}_k$ which is a neighbor of $\vec{x}_i$ but not a neighbor of $\vec{x}_j$.
Since each stencil entry depends on \emph{all} its neighbors, the point $\vec{x}_k$
influences the matrix entry $a_{ij}$, but not the matrix entry $a_{ji}$.

The negative Laplace operator in \eqref{seibold:eq:poisson_equation} is positive
definite. For non-symmetric matrices, we have to ask for slightly less than positive
definiteness. A property which implies a maximum principle and the convergence of
linear solvers, is the M-matrix structure.
\begin{defn}
\label{seibold:def:L_matrix}
A square matrix $A=(a_{ij})_{ij}\in\mathbb{R}^{n\times n}$ is called \emph{Z-matrix},
if $a_{ij}\le 0 \ \forall i\neq j$.
A Z-matrix is called \emph{L-matrix}, if $a_{ii}>0 \ \forall i$.
\end{defn}
We write $A\ge 0$ for $a_{ij}\ge 0 \ \forall i,j$. The same notation applies to vectors.
\begin{defn}
A regular matrix $A$ is called \emph{inverse positive}, if $A^{-1}\ge 0$.
\end{defn}
\begin{defn}
A Z-matrix is called \emph{M-matrix}, if it is inverse positive.
\end{defn}
We use the M-matrix property, since it yields a sufficient condition for inverse
positivity. There are inverse positive matrices that are not M-matrices, so another
approach would be to employ alternative characterizations of inverse positive
matrices \cite{FujimotoRanade2004}.

\subsection{Benefits of an M-Matrix Structure}
The Poisson equation satisfies maximum principles. For instance, consider
\eqref{seibold:eq:poisson_equation} with Dirichlet boundary conditions only.
If $f\le 0$ and $g\le 0$, then the solution satisfies $u\le 0$ \cite{Evans1998}.
A discretization by an M-matrix mimics this property in a discrete maximum
principle.
\begin{thm}
\label{thm:mmatrix_discrete_max_principle}
Let $A$ be an M-matrix. Then $A\vec{x}\le 0$ implies $\vec{x}\le 0$.
Conversely, a Z-matrix satisfying $A\vec{x}\le 0\Rightarrow\vec{x}\le 0$ is an
M-matrix.
\end{thm}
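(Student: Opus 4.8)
The plan is to treat the two implications separately, using only the entrywise sign structure carried by the definitions of inverse positivity and the Z-matrix property.

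For the forward direction, I would exploit inverse positivity directly. Since $A$ is an M-matrix it is regular with $A^{-1}\ge 0$. Given any $\vec{x}$ with $A\vec{x}\le 0$, I write $\vec{x}=A^{-1}(A\vec{x})$. Setting $\vec{y}=A\vec{x}\le 0$, each component is $x_i=\sum_j (A^{-1})_{ij}y_j$, a sum of products of a nonnegative factor $(A^{-1})_{ij}\ge 0$ with a nonpositive factor $y_j\le 0$; hence every $x_i\le 0$ and $\vec{x}\le 0$. In short, a nonnegative matrix maps nonpositive vectors to nonpositive vectors.

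For the converse, $A$ is a given Z-matrix satisfying the implication, and I must show it is an M-matrix, i.e.\ that it is inverse positive. First I would establish regularity. If $A\vec{x}=0$, then $A\vec{x}\le 0$ gives $\vec{x}\le 0$ by hypothesis, while $A(-\vec{x})=0\le 0$ gives $-\vec{x}\le 0$, i.e.\ $\vec{x}\ge 0$; together these force $\vec{x}=0$, so the kernel is trivial and $A^{-1}$ exists. Next I would read off the sign of $A^{-1}$ column by column: applying the hypothesis to the solution $\vec{x}$ of $A\vec{x}=-\vec{e}_j$ yields $\vec{x}\le 0$, and since $\vec{x}=-A^{-1}\vec{e}_j$ this says the $j$-th column $A^{-1}\vec{e}_j\ge 0$. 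As $j$ ranges over all indices, $A^{-1}\ge 0$, so $A$ is inverse positive and therefore an M-matrix.

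There is no serious obstacle here; the argument is a short exercise in propagating signs. The only point requiring care is the sign bookkeeping in the converse: the hypothesis is stated for right-hand sides $\le 0$, so I must apply it to $-\vec{e}_j$ rather than $\vec{e}_j$ to extract the nonnegativity of the columns of $A^{-1}$, and I must use the implication twice (on $\vec{x}$ and on $-\vec{x}$) to obtain regularity. I note that the Z-matrix assumption in the converse is used only to match the definition of an M-matrix; the substantive content---regularity together with $A^{-1}\ge 0$---follows from the maximum-principle implication alone.
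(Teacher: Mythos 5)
Your proof is correct. For the forward direction you use exactly the paper's argument: write $\vec{x}=A^{-1}(A\vec{x})$ and propagate the signs through the nonnegative matrix $A^{-1}$. Where you genuinely add something is in the converse: the paper does not prove it at all, but defers to a citation (Quarteroni--Sacco--Saleri), whereas you give a complete, self-contained two-step argument --- first regularity, by applying the hypothesis to both $\vec{x}$ and $-\vec{x}$ for any kernel element, and then $A^{-1}\ge 0$ column by column via the right-hand sides $-\vec{e}_j$. Both steps are sound (trivial kernel suffices for invertibility of a square matrix, and the sign bookkeeping with $-\vec{e}_j$ is handled correctly), and your closing observation that the Z-matrix hypothesis enters only to match the definition of an M-matrix, while inverse positivity follows from the maximum-principle implication alone, is accurate and slightly sharper than the statement as given. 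The trade-off is simply length versus self-containment: the paper's version is a one-line reduction plus a reference, while yours makes the theorem fully verifiable in place.
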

\begin{proof}
A is an M-matrix, thus $A^{-1}\ge 0$ by definition. Let $\vec{y} = A\vec{x}$.
Then $\vec{x} = A^{-1}\vec{y}$. The component-wise inequalities $A^{-1}\ge 0$
and $\vec{y}\le 0$ imply $\vec{x}\le 0$. The reverse statement is proved in
\cite[p.~29]{QuarteroniSaccoSaleri2000}.
\end{proof}
\begin{thm}
If $A$ is an M-matrix, and $D$ its diagonal part, then $\rho(I-D^{-1}A)<1$,
thus the Jacobi and the Gau{\ss}-Seidel iteration converge.
\end{thm}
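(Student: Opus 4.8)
The plan is to recognize the Jacobi and Gau{\ss}--Seidel iterations as arising from \emph{regular splittings} of $A$ and to invoke the standard theorem that a regular splitting of an inverse-positive matrix is convergent. Write $A = D - B$, where $D$ is the diagonal part and $B = D - A$ collects the negated off-diagonal entries. First I would establish that the diagonal entries $a_{ii}$ are positive, so that $D$ is invertible with $D^{-1}\ge 0$: if some $a_{ii}\le 0$, then by the Z-matrix property the entire $i$-th row of $A$ is nonpositive, hence $e_i^T A \le 0$; multiplying on the right by $A^{-1}\ge 0$ gives $e_i^T = (e_i^T A)A^{-1}\le 0$, contradicting the unit entry in position $i$. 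Thus $a_{ii}>0$, $D^{-1}\ge 0$, and moreover $B\ge 0$ because the off-diagonal entries of $A$ are nonpositive.

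With $M=D$ and $N=B$ we have $A = M - N$, $M^{-1}\ge 0$, $N\ge 0$, i.e.~a regular splitting, and the Jacobi iteration matrix is exactly $I - D^{-1}A = D^{-1}B = M^{-1}N$. The key step is then the regular-splitting lemma: if $A=M-N$ with $M^{-1}\ge 0$, $N\ge 0$ and $A^{-1}\ge 0$, then $\rho(M^{-1}N) = \frac{\rho(A^{-1}N)}{1+\rho(A^{-1}N)} < 1$. I expect the proof of this lemma to be the main obstacle, and it rests on the Perron--Frobenius theorem. Since $A^{-1}N\ge 0$, Perron--Frobenius supplies a nonnegative eigenvector $y$ with $A^{-1}Ny = \beta y$, where $\beta = \rho(A^{-1}N)\ge 0$. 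Rewriting $Ny = \beta Ay = \beta(M-N)y$ yields $M^{-1}Ny = \frac{\beta}{1+\beta}\,y$, so $\frac{\beta}{1+\beta}$ is an eigenvalue of the nonnegative matrix $M^{-1}N$. Applying Perron--Frobenius in the reverse direction to the Perron eigenvector of $M^{-1}N$, together with the monotonicity of $t\mapsto t/(1+t)$, pins down $\rho(M^{-1}N)=\beta/(1+\beta)$, which is strictly less than $1$ because $\beta$ is finite; alternatively this identity is Varga's theorem on regular splittings, which I would cite.

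For Gau{\ss}--Seidel I would repeat the argument with the finer splitting $A = (D-L) - U$, where $-L$ and $-U$ are the strictly lower and strictly upper triangular parts of $A$, so that $L,U\ge 0$. Here $M = D-L$ is a lower triangular Z-matrix with positive diagonal, and its inverse is nonnegative, as seen from the finite Neumann series $M^{-1} = \sum_{j\ge 0}(D^{-1}L)^j D^{-1}$, in which $D^{-1}L\ge 0$ is strictly lower triangular and hence nilpotent. With $N=U\ge 0$ this is again a regular splitting of the inverse-positive matrix $A$, and the same lemma gives $\rho(M^{-1}N)<1$ for the Gau{\ss}--Seidel iteration matrix. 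Convergence of both iterations then follows from the standard fact that a stationary iteration $x^{(k+1)} = (I - M^{-1}A)x^{(k)} + M^{-1}b$ converges for every initial guess if and only if the spectral radius of its iteration matrix is below $1$.
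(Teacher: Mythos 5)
Your proof is correct, but it follows a different route than the paper. The paper disposes of the statement by citation: Jacobi convergence for M-matrices is quoted from Hackbusch, and Gau{\ss}--Seidel convergence is then deduced from the Stein--Rosenberg theorem, which compares the spectral radii of the Gau{\ss}--Seidel and Jacobi iteration matrices when the Jacobi matrix $D^{-1}B$ is nonnegative (yielding, in particular, $\rho(G)\le\rho(J)<1$). You instead treat both iterations uniformly as \emph{regular splittings} $A=M-N$ with $M^{-1}\ge 0$, $N\ge 0$ of the inverse-positive matrix $A$, and invoke Varga's identity $\rho(M^{-1}N)=\rho(A^{-1}N)/(1+\rho(A^{-1}N))<1$. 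Your preliminary steps are all sound: the positivity of the diagonal via $e_i^T=(e_i^TA)A^{-1}$, the nonnegativity of $(D-L)^{-1}$ through the nilpotent Neumann series, and the Perron--Frobenius manipulation $Ny=\beta(M-N)y\Rightarrow M^{-1}Ny=\tfrac{\beta}{1+\beta}y$ (for the upper bound one also needs the reverse direction you allude to: if $M^{-1}Nx=\mu x$ with $x\ge 0$, then $\mu>1$ or $\mu=1$ would force $A^{-1}Nx$ to have a negative eigenvalue or $Ax=0$, both impossible). What each approach buys: the paper's is shorter given the literature and the Stein--Rosenberg theorem gives the extra comparison between the two methods; yours is self-contained, handles both iterations with a single lemma, and extends immediately to any other regular splitting of an M-matrix.
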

\begin{proof}
The convergence of the Jacobi iteration is given in \cite{Hackbusch1994}.
The Gau{\ss}-Seidel convergence follows from the
Stein-Rosenberg-Theorem \cite{Varga2000}.
\end{proof}
The performance of multigrid methods for meshfree finite difference matrices
has been investigated in \cite{SeiboldMeshfree2007,SeiboldECMI2008}.
Further benefits of an M-matrix structure with respect to linear solvers can
be found in \cite{Varga2000}.

\subsection{A Sufficient Condition for an M-Matrix Structure}
\label{subsec:mmatrix_suff_cond}
Conditions that imply the M-matrix property are required, since the inverse matrix is
typically not directly available.
Let the unknowns be labeled by an index set $I$. We consider square matrices
$A\in\mathbb{R}^{I\times I}$.
\begin{defn}
The \emph{graph} $G(A)$ of a matrix $A$ is defined by
$G(A) = \{(i,j)\in I\times I : a_{ij}\neq 0\}$.
The index $i\in I$ is called \emph{connected} to $j\in I$, if a chain
$i=i_0,i_1,\dots,i_{k-1},i_k=j \ \in I$ exists, such that
$(i_{\nu-1},i_\nu)\in G(A) \ \forall \nu=1,\dots,k$.
\end{defn}
\begin{rem}
For a finite difference matrix, each index $i\in I$ corresponds to a point $\vec{x}_i$.
The index $i\in I$ being connected to $j\in I$ means that the point $\vec{x}_i$
connects (indirectly) to the point $\vec{x}_j$ via stencil entries.
\end{rem}
\begin{defn}
A finite difference matrix is called \emph{essentially irreducible} if every point
is connected to a Dirichlet boundary point.
\end{defn}
\begin{rem}
A finite difference matrix that is not essentially irreducible, is singular,
since the points that are not connected to a Dirichlet point form a singular submatrix.
\end{rem}
\begin{defn}
A matrix $A\in\mathbb{R}^{I\times I}$ is called \emph{essentially diagonally dominant},
if it is weakly diagonally dominant ($\forall i\in I: |a_{ii}|\ge\sum_{k\neq i}|a_{ik}|$),
and every point $i\in I$ is connected to a point $j\in I$ which satisfies the strict
diagonal dominance relation $|a_{jj}|>\sum_{k\neq j}|a_{jk}|$.
\end{defn}
\begin{thm}
\label{seibold:thm:ess_irred_is_ess_diagdom}
An L-matrix arising as a finite difference discretization of
\eqref{seibold:eq:poisson_equation} is essentially diagonally dominant, if it is
essentially irreducible.
\end{thm}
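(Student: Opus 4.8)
The plan is to exploit the trichotomy among the rows of $A$—those arising from interior points, from Neumann points, and from Dirichlet points—and to show that consistency forces the first two kinds to be weakly diagonally dominant with \emph{equality}, while the Dirichlet rows supply the strict dominance. First I would fix signs: since $A$ is an L-matrix (Def.~\ref{seibold:def:L_matrix}), every row $i$ has $a_{ii}>0$ and $a_{ij}\le 0$ for $j\neq i$, so that $|a_{ii}|=a_{ii}$ and $|a_{ij}|=-a_{ij}$.

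For an interior or Neumann point the row of $A$ is a consistent stencil, and the constant-exactness constraint in \eqref{seibold:eq:constraints_laplace} (respectively \eqref{seibold:eq:constraints_Neumann}) reads $\sum_{i=0}^m s_i=0$; this identity is unaffected by the overall sign with which the stencil enters $A$. Hence the row sum vanishes, $\sum_{j} a_{ij}=0$, which rearranges to $a_{ii}=-\sum_{j\neq i}a_{ij}=\sum_{j\neq i}|a_{ij}|$. Thus each interior and Neumann row satisfies the weak diagonal dominance relation with equality, and is in particular weakly diagonally dominant.

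Next I would treat the Dirichlet rows. A Dirichlet point carries no Laplace stencil; its row merely prescribes the nodal value, so it is an identity row with $a_{ii}>0$ and all off-diagonal entries zero. Consequently $|a_{ii}|>0=\sum_{j\neq i}|a_{ij}|$, i.e.\ every Dirichlet row is \emph{strictly} diagonally dominant. Together with the previous step this establishes weak diagonal dominance for \emph{all} rows of $A$ and identifies the Dirichlet points as the carriers of strict dominance.

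Finally I would assemble the two ingredients using the essential-irreducibility hypothesis. By assumption every index $i$ is connected, in the sense of the graph $G(A)$, to a Dirichlet point $j$, and by the preceding step $j$ is strictly diagonally dominant. Combined with global weak diagonal dominance this is precisely the definition of essential diagonal dominance, completing the argument. The only real difficulty is bookkeeping rather than depth: one must read the constant-exactness row-sum identity correctly after the sign flip that turns the stencil approximating $\Delta u$ into a row of the matrix discretizing $-\Delta u$, and one must verify that the Dirichlet rows are genuinely strict (not merely weak), so that the connection guaranteed by irreducibility indeed terminates at a strictly dominant node.
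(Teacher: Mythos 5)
Your proposal is correct and follows essentially the same route as the paper's proof: the constant-exactness constraint gives weak diagonal dominance (with equality) for interior and Neumann rows, the Dirichlet rows are the strictly dominant ones, and essential irreducibility supplies the required connection to them. You simply spell out the sign bookkeeping that the paper leaves implicit.
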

\begin{proof}
For an L-matrix the constant relation in \eqref{seibold:eq:constraints_laplace}
implies the weak diagonal dominance relation for every interior and Neumann point.
Each row corresponding to a Dirichlet point satisfies the strict diagonal dominance
relation.
\end{proof}
\begin{thm}
\label{seibold:thm:Lmatrix_is_Mmatrix}
An essentially diagonally dominant L-matrix is an M-matrix.
\end{thm}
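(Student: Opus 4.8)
The plan is to avoid computing $A^{-1}$ directly and instead invoke the converse half of Theorem~\ref{thm:mmatrix_discrete_max_principle}: since an L-matrix is in particular a Z-matrix, it suffices to verify the discrete maximum principle $A\vec{x}\le 0\Rightarrow\vec{x}\le 0$. Establishing this implication then upgrades $A$ to an M-matrix for free, so the entire task reduces to a sign-propagation argument on the components of $\vec{x}$, driven by the geometry of the matrix graph $G(A)$.

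The core step begins by restating weak diagonal dominance in a convenient form. For a Z-matrix one has $|a_{ij}|=-a_{ij}$ for $i\neq j$, so weak dominance $a_{ii}\ge\sum_{k\neq i}|a_{ik}|$ is equivalent to the nonnegative row-sum condition $a_{ii}+\sum_{k\neq i}a_{ik}\ge 0$, with equality precisely when strict dominance fails. Now assume $A\vec{x}\le 0$ and, for contradiction, that $\vec{x}$ has a positive entry. I would set $M=\max_i x_i>0$ and $S=\{i:x_i=M\}$, then fix $i\in S$. Using $a_{ij}\le 0$ together with $x_j\le M$ gives $a_{ij}x_j\ge a_{ij}M$, whence $(A\vec{x})_i\ge M\bigl(a_{ii}+\sum_{k\neq i}a_{ik}\bigr)\ge 0$; combined with $(A\vec{x})_i\le 0$ this forces $(A\vec{x})_i=0$ and collapses every inequality above to an equality.

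From this single equality I would extract two independent facts. Since $M>0$, the row sum at each $i\in S$ must vanish, so no index of $S$ can be strictly diagonally dominant. Moreover, writing $0=\sum_{k\neq i}a_{ik}(x_k-M)$ as a sum of nonnegative terms forces each term to vanish, hence $x_k=M$ whenever $a_{ik}\neq 0$; thus $S$ is closed under $G(A)$, meaning no edge leaves $S$. Essential diagonal dominance now supplies, for any $i\in S$, a chain in $G(A)$ from $i$ to a strictly diagonally dominant index $j^\ast$; graph-closure pushes each successive vertex of the chain, and therefore $j^\ast$, into $S$, contradicting that $S$ contains no strictly dominant index. Hence $\vec{x}$ has no positive entry, i.e.\ $\vec{x}\le 0$, and the converse of Theorem~\ref{thm:mmatrix_discrete_max_principle} completes the proof.

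The main obstacle is the bookkeeping in the final step: one must recognize that the scalar identity $(A\vec{x})_i=0$ simultaneously encodes the vanishing row sum and the propagation $x_i=M\Rightarrow x_k=M$ along edges of $G(A)$, and that these two consequences interlock precisely with the connectivity hypothesis in the definition of essential diagonal dominance. Everything else is routine sign chasing; the delicate point is seeing that a nonempty, positive maximizing set would have to be both graph-closed and free of strictly dominant indices, which is exactly the configuration the essential-irreducibility-type hypothesis forbids.
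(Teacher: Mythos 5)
Your proof is correct, but it takes a different route from the paper: the paper disposes of this theorem by citing the literature (Hackbusch), where the standard argument runs through the convergence of the Jacobi iteration and a Neumann-series representation of $A^{-1}$, whereas you reduce the claim to the converse half of Theorem~\ref{thm:mmatrix_discrete_max_principle} and then verify the discrete maximum principle directly. Your sign-propagation argument is sound in all its steps: for an L-matrix the weak dominance relation is indeed the nonnegative row-sum condition $a_{ii}+\sum_{k\neq i}a_{ik}\ge 0$; the decomposition $(A\vec{x})_i = M\bigl(a_{ii}+\sum_{k\neq i}a_{ik}\bigr)+\sum_{k\neq i}a_{ik}(x_k-M)$ into two nonnegative pieces correctly forces both to vanish at a positive maximizer; and the resulting graph-closure of the maximizing set $S$ collides with the connectivity clause of essential diagonal dominance exactly as you describe. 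Two small points are worth making explicit. First, your argument is not fully self-contained either, since the converse of Theorem~\ref{thm:mmatrix_discrete_max_principle} is itself only cited in the paper; but that converse is an independent general fact about Z-matrices (the implication $A\vec{x}\le 0\Rightarrow\vec{x}\le 0$ forces injectivity, hence regularity, and applying it to $A\vec{y}=-\vec{e}_j$ yields nonnegativity of each column of $A^{-1}$), so there is no circularity. Second, the chain in $G(A)$ may contain the diagonal pairs $(i,i)$, which your closure step handles trivially. What your approach buys is a short, elementary proof that reuses machinery already present in the paper; what the cited route buys is the quantitative byproduct $\rho(I-D^{-1}A)<1$, which the paper needs anyway for its solver-convergence statements.
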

\begin{proof}
The proof is given in \cite[p.~153]{Hackbusch1994}.
\end{proof}
If problem \eqref{seibold:eq:poisson_equation} can be discretized by positive stencils
and every point is connected to a Dirichlet point, then the resulting matrix
is an M-matrix.

\section{Least Squares Approaches}
\label{seibold:sec:least_squares_approaches}
Classical approaches for meshfree derivative approximation are moving least squares
methods, based on scattered data interpolation \cite{LancasterSalkauskas1981},
and local approximation methods, based on generalized finite difference
methods \cite{LiszkaOrkisz1980}. Their application to meshfree settings has
been analyzed in \cite{DuarteLiszkaTworzyako1996,Levin1998}. Differences between
moving and local approaches have been investigated in \cite{SeiboldDiss2006}.

Around a central point $\vec{x}_0$, points inside a radius $r$ are considered.
A distance weight function $w(\delta)$ is defined, which is small for $\delta>r$.
We consider
interpolating\footnote{If $\lim_{\delta\to 0} w(\delta)$ exists, the approach is
called \emph{approximating}, otherwise \emph{interpolating}.}
approaches with $w(\delta)=\delta^{-\alpha}$. Each neighboring point $\vec{x}_i$ is
assigned a weight $w_i = w\prn{\|\vec{x}_i-\vec{x}_0\|_2}$. A unique stencil is
defined via a quadratic minimization problem
\begin{equation}
\min\sum_{i=1}^n\frac{s_i^2}{w_i}, \ \mathrm{s.t.} \
V\cdot\vec{s} = \vec{b}\;.
\label{seibold:eq:quadratic_minimization}
\end{equation}
Using $W=\mathrm{diag}(w_i)$, its solution is
\begin{equation}
\vec{s} = WV^T(VWV^T)^{-1}\cdot\vec{b}\;.
\label{seibold:eq:stencil_lsq}
\end{equation}
After the weights $w_i$ are evaluated, the $k\times k$ matrix $VWV^T$ has to be set up,
the linear system $(VWV^T)\cdot\vec{v}=\vec{b}$ to be solved, and the product
$\vec{s} = WV^T\cdot\vec{v}$ to be computed. This requires $k(k+1)m+\frac{k^3}{3}$
floating point operations \cite[p.~150]{SeiboldDiss2006}.
Least squares approaches do not yield minimal stencils, unless exactly $k$ neighbors
are considered. In general, they also do not yield positive stencils.
\begin{ex}
\label{seibold:ex:qm_nonpos_stencil}
Consider $\vec{x}_0=(0,0)$ and 6 neighbors on the unit
circle $\vec{x}_i=(\cos(\frac{\pi}{2}\varphi_i),\sin(\frac{\pi}{2}\varphi_i))$,
where $(\varphi_1,\dots,\varphi_6)=(0,1,2,3,0.1,0.2)$
(see Fig.~\ref{seibold:fig_lsq_nonpos_stencil_qm}).
Since all neighbors have the same distance from $\vec{x}_0$, the distance weight
function does not play a role. Formula \eqref{seibold:eq:stencil_lsq} yields the
non-positive least squares stencil $\vec{s}=(0.846,1.005,0.998,1.003,0.312,-0.164)$.
However, the configuration admits a positive stencil, namely $\vec{s}=(1,1,1,1,0,0)$.
\end{ex}

\section{Linear Minimization Approach}
\label{seibold:sec:lp_approach}
Least squares approaches do not guarantee positive stencils. As motivated in
Sect.~\ref{seibold:subsec:min_pos_stencils}, we wish to allow positive stencils only.
Hence, we enforce positivity, i.e.~we search for solutions in the polyhedron
\begin{equation}
P = \{\vec{s}\in\mathbb{R}^m : V\cdot\vec{s} = \vec{b} , \
\vec{s}\ge 0\}\;.
\label{seibold:eq:polyhedron}
\end{equation}
This is the feasibility problem of linear optimization \cite{Vanderbei2001}.
In Sect.~\ref{seibold:sec:lp_conditions_positive_stencil} we show under which
conditions solutions exist.
If $P$ is nonvoid and not degenerate, there are infinitely many feasible stencils.
To single out a unique stencil we formulate a \emph{linear} minimization problem
\begin{equation}
\min \sum_{i=1}^m \frac{s_i}{w_i}, \ \mathrm{s.t.} \
V\cdot\vec{s}=\vec{b}, \ \vec{s}\ge 0\;,
\label{seibold:eq:linear_minimization}
\end{equation}
where the weights $w_i=w(\|\vec{x}_i-\vec{x}_0\|)$ are defined by an appropriately
decaying (see Thm.~\ref{seibold:thm:mps_distance_weight_function}) non-negative
distance weight function $w$.
Problem \eqref{seibold:eq:linear_minimization} is a linear program (LP) in standard
form. It is bounded, since we have imposed sign constraints and the weights $w_i$
are all non-negative.
\begin{thm}
If the polyhedron \eqref{seibold:eq:polyhedron} is nonvoid, then the linear
minimization approach \eqref{seibold:eq:linear_minimization} yields minimal positive
stencils.
\end{thm}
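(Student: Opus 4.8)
The plan is to read \eqref{seibold:eq:linear_minimization} as a linear program in standard form and to invoke the fundamental theorem of linear programming \cite{Vanderbei2001}. Positivity of the output stencil is immediate and requires no work: every feasible point of \eqref{seibold:eq:linear_minimization} lies in the polyhedron $P$ of \eqref{seibold:eq:polyhedron} and hence satisfies $\vec{s}\ge 0$, so by Definition~\ref{seibold:def:positive_stencil} the associated stencil is positive. The content of the statement lies entirely in the claim of \emph{minimality}.

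First I would verify that an optimum is attained. By hypothesis $P$ is nonvoid, so the feasible region is nonempty. Because each $s_i\ge 0$ and each weight $w_i>0$ (so that $1/w_i>0$), every term $s_i/w_i$ of the objective is nonnegative, whence the objective is bounded below by $0$ on $P$. A feasible linear program in standard form whose objective is bounded below attains its minimum, and by the fundamental theorem of linear programming the minimum is attained at a vertex, i.e.\ a basic feasible solution, of $P$. This is exactly the point produced by a simplex-type solver.

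The crux is to bound the number of nonzero entries of such a vertex. At a basic feasible solution of $V\cdot\vec{s}=\vec{b}$, $\vec{s}\ge 0$, the columns of $V$ corresponding to the nonzero (basic) components are linearly independent. Consequently the number of nonzero components of $\vec{s}$ is at most $\mathrm{rank}(V)$, which cannot exceed the number of rows $k=\frac{d(d+3)}{2}$ of $V$. Deleting every neighbor whose coefficient $s_i$ vanishes therefore leaves a stencil with $m'\le k$ neighbors that still satisfies the consistency constraints \eqref{seibold:eq:constraints_laplace}; by Definition~\ref{seibold:def:minimal_stencil} this stencil is minimal.

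The main subtlety I anticipate is that the conclusion is tied to the \emph{kind} of optimal solution chosen. The bound $m'\le k$ uses that the optimum is taken at a vertex of $P$; an interior-point solver could instead return a point in the relative interior of the optimal face, which generally has more than $k$ nonzero entries and need not be minimal. The statement is thus to be understood for a vertex-returning (simplex-type) solution. A secondary, harmless point is that $V$ need not have full row rank, as noted in the discussion of general configuration in Sect.~\ref{seibold:subsec:consistent_derivatives}; this does not affect the argument, since the count bounds the nonzeros by $\mathrm{rank}(V)\le k$ whether or not the rank is full.
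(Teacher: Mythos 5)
Your proposal is correct and follows essentially the same route as the paper: positivity comes directly from the sign constraints, and minimality from the fundamental theorem of linear programming, which guarantees a basic solution with at most $k$ nonzero entries. The additional details you supply (boundedness of the objective, the rank bound at a vertex, and the caveat about vertex-returning solvers) are sound elaborations of the same argument.
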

\begin{proof}
The sign constraints in \eqref{seibold:eq:linear_minimization} ensure that the selected
stencil is \emph{positive}. The existence of a \emph{minimal} solution is ensured by
the \emph{fundamental theorem of linear programming} \cite{Vanderbei2001}.
If the LP \eqref{seibold:eq:linear_minimization} has a solution, then it also has a
basic solution, in which at most $k$ of the $m$ stencil entries $s_i$ are different
from zero.
\end{proof}
In contrast to least squares methods, the LP approach yields nonzero values only for
a few selected points, and a continuous dependence on the point positions is not
possible (see Rem.~\ref{seibold:rem:no_continuous_dependence}).
\begin{rem}
One could ask why not remain with a least squares problem, and additionally impose
sign constraints. While this would be a valid approach (the solution is obtained by
Karush-Kuhn-Tucker methods \cite{Vanderbei2001}), it would have the worst of both
worlds. The solution would not depend continuously on the point cloud geometry
whenever the sign constraints are active, and the resulting stencil would not be
minimal. When sign constraints are imposed, linear minimization is preferable.
\end{rem}

\begin{figure}
 \centering
 \begin{minipage}[t]{.30\textwidth}
  \centering
  \includegraphics[width=0.99\textwidth]{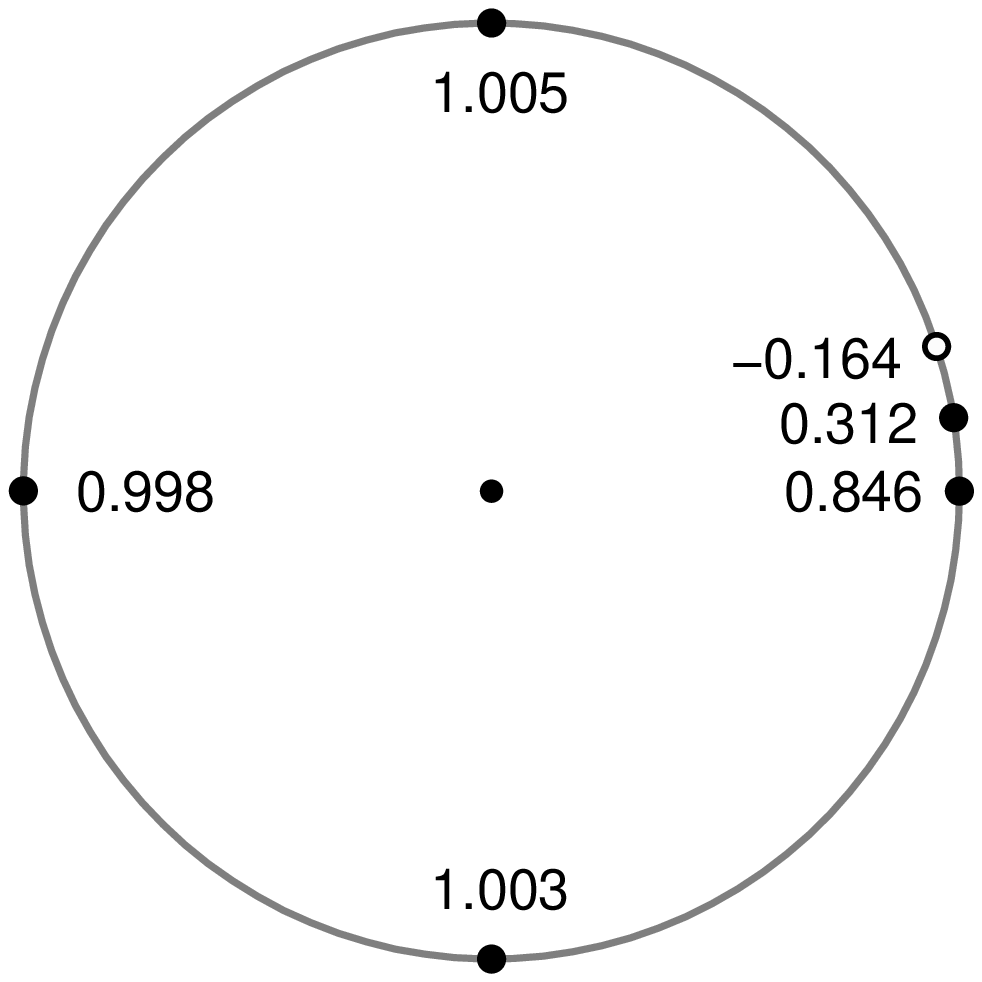}
  \caption{Non-positive LSQ stencil}
  \label{seibold:fig_lsq_nonpos_stencil_qm}
 \end{minipage}
 \hfill
 \begin{minipage}[t]{.67\textwidth}
  \centering
  \begin{minipage}[t]{.32\textwidth}
   \includegraphics[width=0.99\textwidth]{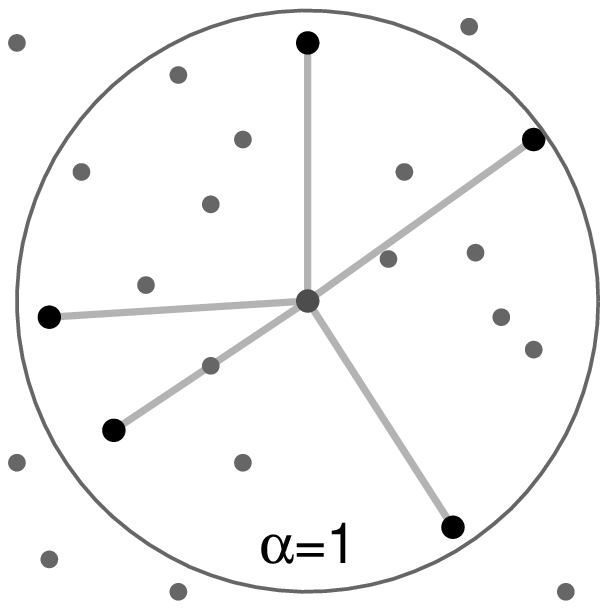}
  \end{minipage}
  \hfill
  \begin{minipage}[t]{.32\textwidth}
   \includegraphics[width=0.99\textwidth]{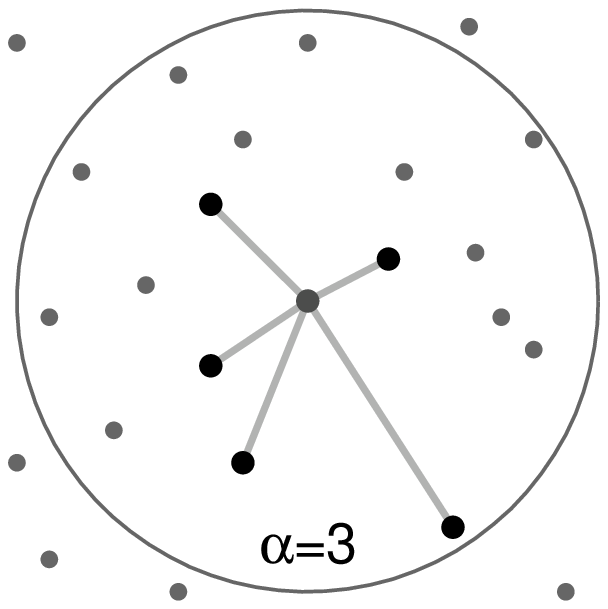}
  \end{minipage}
  \hfill
  \begin{minipage}[t]{.32\textwidth}
   \includegraphics[width=0.99\textwidth]{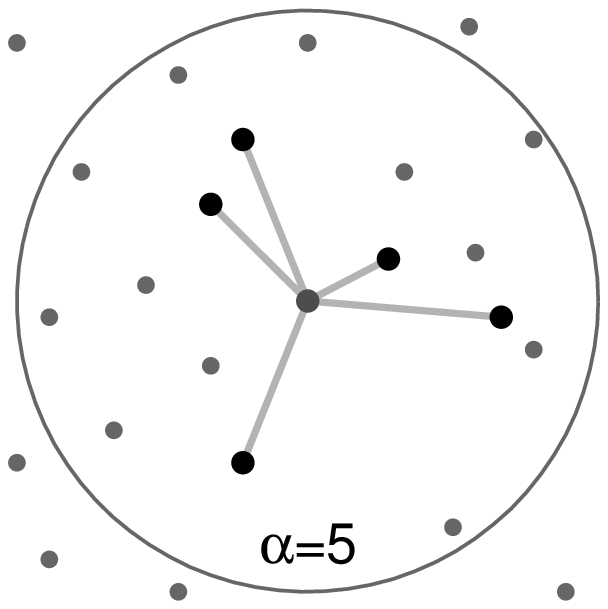}
  \end{minipage}
  \caption{Minimal positive stencil for various values of $\alpha$}
  \label{seibold:fig_stencil_weight_min_alpha}
 \end{minipage}
\end{figure}

\subsection{Solving the Linear Programs}
\label{subsec:solving_lp}
For every interior point consider a set of candidate points ($m>k$). A basic solution
of \eqref{seibold:eq:linear_minimization} is computed. Only the nonzero stencil values
enter the Poisson matrix.
We refer to this approach as \emph{minimal positive stencil} (MPS) method.
The LPs \eqref{seibold:eq:linear_minimization} are small, but they have to be solved
for every interior point. To our knowledge, there are no general results about
efficient methods for such small LPs, especially considering the special structure
of the Vandermonde matrix. A numerical comparison of various methods has been presented
in \cite[p.~148]{SeiboldDiss2006}. Simplex methods perform best for the arising LPs.
A basis change corresponds to one stencil point replacing another. The theoretical worst
case performance of simplex methods is not observed. Typical runs find the solution in
about $1.5 k$ steps, resulting in a complexity of $O(k^2m)$, which equals the
effort of least squares approaches (see Sect.~\ref{seibold:sec:least_squares_approaches}).

\subsection{Geometric Interpretation of Minimal Positive Stencils}
\label{subsec:mps_laplace_geometric_interpretation}
The MPS method forms a compromise between selecting neighbors close by and distributed
nicely (see Def.~\ref{seibold:def:points_distributed_nicely}) around the central point.
How much preference is given to which objective depends on the locality parameter
$\alpha$ in the distance weight function $w(\delta) = |\delta|^{-\alpha}$.
\begin{thm}
\label{seibold:thm:mps_distance_weight_function}
The MPS method \eqref{seibold:eq:linear_minimization} only leads to reasonable results,
if the distance weight function decays faster than $|\delta|^{-2}$.
\end{thm}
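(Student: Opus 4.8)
The plan is to exploit the \emph{trace} of the quadratic consistency constraint in \eqref{seibold:eq:constraints_laplace}. Writing $\delta_i = \|\vec{x}_i-\vec{x}_0\|$ and taking the trace of $\sum_{i=1}^m\prn{\vec{\bar x}_i\cdot\vec{\bar x}_i^T}s_i = 2I$, together with $\mathrm{tr}\prn{\vec{\bar x}_i\cdot\vec{\bar x}_i^T}=\|\vec{\bar x}_i\|^2=\delta_i^2$ and $\mathrm{tr}(2I)=2d$, I obtain the single scalar identity
\begin{equation*}
\sum_{i=1}^m \delta_i^2\, s_i = 2d\;,
\end{equation*}
which every feasible stencil in the polyhedron \eqref{seibold:eq:polyhedron} satisfies \emph{simultaneously}. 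This conserved quantity is the engine of the whole argument.

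Next I would substitute the weight $w_i=\delta_i^{-\alpha}$ into the objective of \eqref{seibold:eq:linear_minimization} and rewrite it relative to this identity,
\begin{equation*}
\sum_{i=1}^m \frac{s_i}{w_i} = \sum_{i=1}^m s_i\,\delta_i^{\alpha} = \sum_{i=1}^m \prn{s_i\,\delta_i^2}\,\delta_i^{\alpha-2}\;.
\end{equation*}
Setting $t_i=s_i\,\delta_i^2\ge 0$ turns the objective into a weighted sum of the factors $\delta_i^{\alpha-2}$ under the \emph{fixed} total mass $\sum_i t_i = 2d$. The borderline case $\alpha=2$ is now immediate: then $\delta_i^{\alpha-2}\equiv 1$ and the objective collapses to the constant $2d$ on the entire feasible set. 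The LP is therefore fully degenerate, every positive stencil is optimal, and the minimization cannot single out a meaningful stencil at all.

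For $\alpha<2$ the exponent $\alpha-2$ is negative, so $\delta_i^{\alpha-2}$ \emph{decreases} with distance and minimizing the weighted sum rewards shifting mass onto the \emph{farthest} admissible points, the exact opposite of what a local approximation should do. I would make this concrete through the one-dimensional reduction, where a basic solution uses one neighbor at distance $p$ on each side of $\vec{x}_0$: solving the linear and quadratic constraints gives $s_L=\tfrac{2}{p(p+q)}$, $s_R=\tfrac{2}{q(p+q)}$ for a left/right pair at distances $p,q$, whence the objective value is $\tfrac{2}{p+q}\prn{p^{\alpha-1}+q^{\alpha-1}}$. This equals $2$ independently of $p,q$ at $\alpha=2$, is minimized by taking $p,q$ as large as possible for $\alpha<2$, and is minimized by the closest neighbors for $\alpha>2$. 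Only the last regime favors nearby points, which is the assertion of the theorem.

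The main obstacle is that ``reasonable results'' is not a sharp hypothesis, so the rigorous kernel is the exact $\alpha=2$ degeneracy established above, which is unconditional. The $\alpha<2$ failure then has to be pinned to a concrete defect, namely the systematic preference for distant neighbors, which simultaneously destroys locality and inflates the $O(r^3)$ truncation error; the explicit one-dimensional formula supplies the clean monotone dependence on $\alpha$ that certifies $\alpha>2$, i.e.\ decay faster than $|\delta|^{-2}$, as the only admissible range.
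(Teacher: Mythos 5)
Your proof is correct and follows essentially the same route as the paper: both hinge on the trace identity $\sum_{i=1}^m\|\vec{\bar x}_i\|_2^2 s_i = 2d$ obtained from the quadratic consistency constraints, and both conclude degeneracy at $\alpha=2$, preference for distant points for $\alpha<2$, and preference for nearby points for $\alpha>2$. Your substitution $t_i=s_i\delta_i^2$ and the explicit one-dimensional computation merely make rigorous what the paper's (very terse) proof asserts without detail.
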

\begin{proof}
Summing over the diagonal in the quadratic constraints in
\eqref{seibold:eq:constraints_laplace} yields the relation
$\sum_{i=1}^m\|\vec{\bar x}_i\|_2^2s_i = 2d$.
If $w(\delta)$ decays faster than $|\delta|^{-2}$, points close to the central point
are given preference.
If $w(\delta)=|\delta|^{-2}$, the LP \eqref{seibold:eq:linear_minimization} is
degenerate.
If $w(\delta)$ decays slower than $|\delta|^{-2}$, the approach selects
points far away from the central point, possibly resulting in ``checkerboard''
instabilities.
\end{proof}
The dependence of the MPS stencil on $\alpha$ is shown
in Fig.~\ref{seibold:fig_stencil_weight_min_alpha}.\footnote{The figures are 2d
for simplicity of presentation. The MPS method applies directly
to, and shows its strength, in 3d.}
From the candidate points in the circle, five neighbors are selected.
While for $\alpha=1$ far away points are selected, $\alpha\in\{3,5\}$ yields
nearby points. For $\alpha=3$ smaller angles are more important, for $\alpha=5$
smaller distances. Note that the MPS method never
selects neighbors which are not distributed around the central point (as defined in
Sect.~\ref{seibold:sec:lp_conditions_positive_stencil}), even if those are the $k$
closest points.
\begin{rem}
For regular grids, the MPS method selects standard finite difference stencils.
For instance, for a regular Cartesian grid, the standard 5-point (2d), respectively
7-point (3d) stencils are obtained. In these cases, the basic solution is degenerate,
i.e.~some of the basis variables are zero.
\end{rem}

\subsection{Neighborhood Criteria}
\label{seibold:subsec:neighborhood}
The circular neighborhood criterion yields a large number of neighbors (unless $r$ is
very small). Also, it does not guarantee positive stencils, as the example in
Fig.~\ref{seibold:fig_neighborhood_circular} shows. The selected neighbors are
marked bold. Neighbors with non-positive stencil values are indicated by a white center.
The presented methods can also be based on other neighborhood criteria.

Defining a neighborhood via the a Delaunay triangulation (tetrahedrization in 3d)
\cite{Shewchuk1999}, yields significantly fewer neighbors. However,
the construction is a meshing procedure, hence it is often undesirable in a meshfree
context.\footnote{Hybrid approaches exist that use a Delaunay mesh combined
with meshfree methods.}
Fig.~\ref{seibold:fig_neighborhood_delaunay} shows that a Delaunay neighborhood
constructed from a Voronoi tessellation may yield non-positive stencils. In addition,
it is possible that not enough neighbors are selected. If, in the example, the two
negative points were removed, only four neighbors would be defined.

The \emph{four quadrant criterion} \cite{DuarteLiszkaTworzyako1996}
(eight sectors in 3d) defines a local coordinate system and selects the two closest
points from each sector. It guarantees the neighbors to be distributed around the
central point. However, it does not guarantee positive stencils, as the example in
Fig.~\ref{seibold:fig_neighborhood_four_quadrant} shows.

\begin{figure}
\centering
\begin{minipage}[t]{.23\textwidth}
\centering
\includegraphics[width=0.99\textwidth]{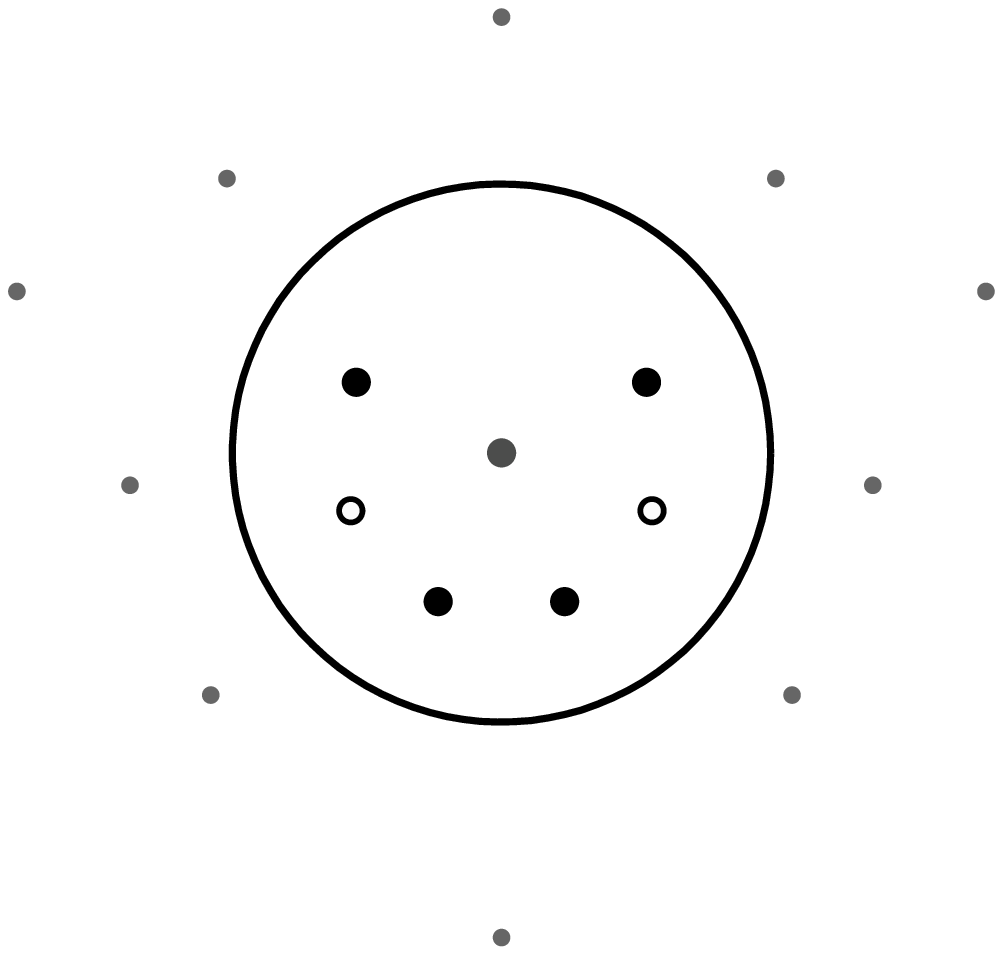}
\caption{Circular neighborhood}
\label{seibold:fig_neighborhood_circular}
\end{minipage}
\hfill
\begin{minipage}[t]{.23\textwidth}
\centering
\includegraphics[width=0.99\textwidth]{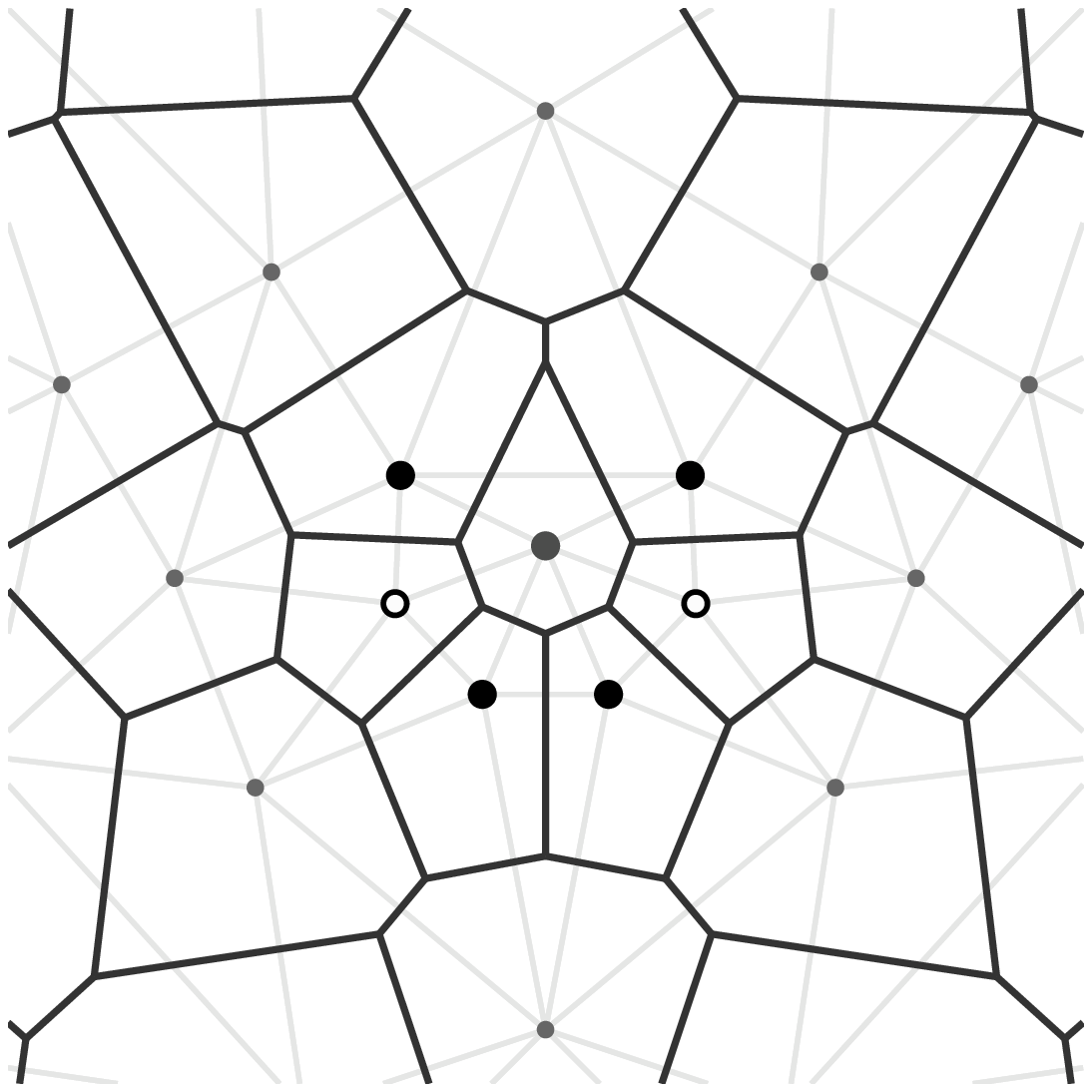}
\caption{Delaunay neighbors}
\label{seibold:fig_neighborhood_delaunay}
\end{minipage}
\hfill
\begin{minipage}[t]{.23\textwidth}
\centering
\includegraphics[width=0.99\textwidth]{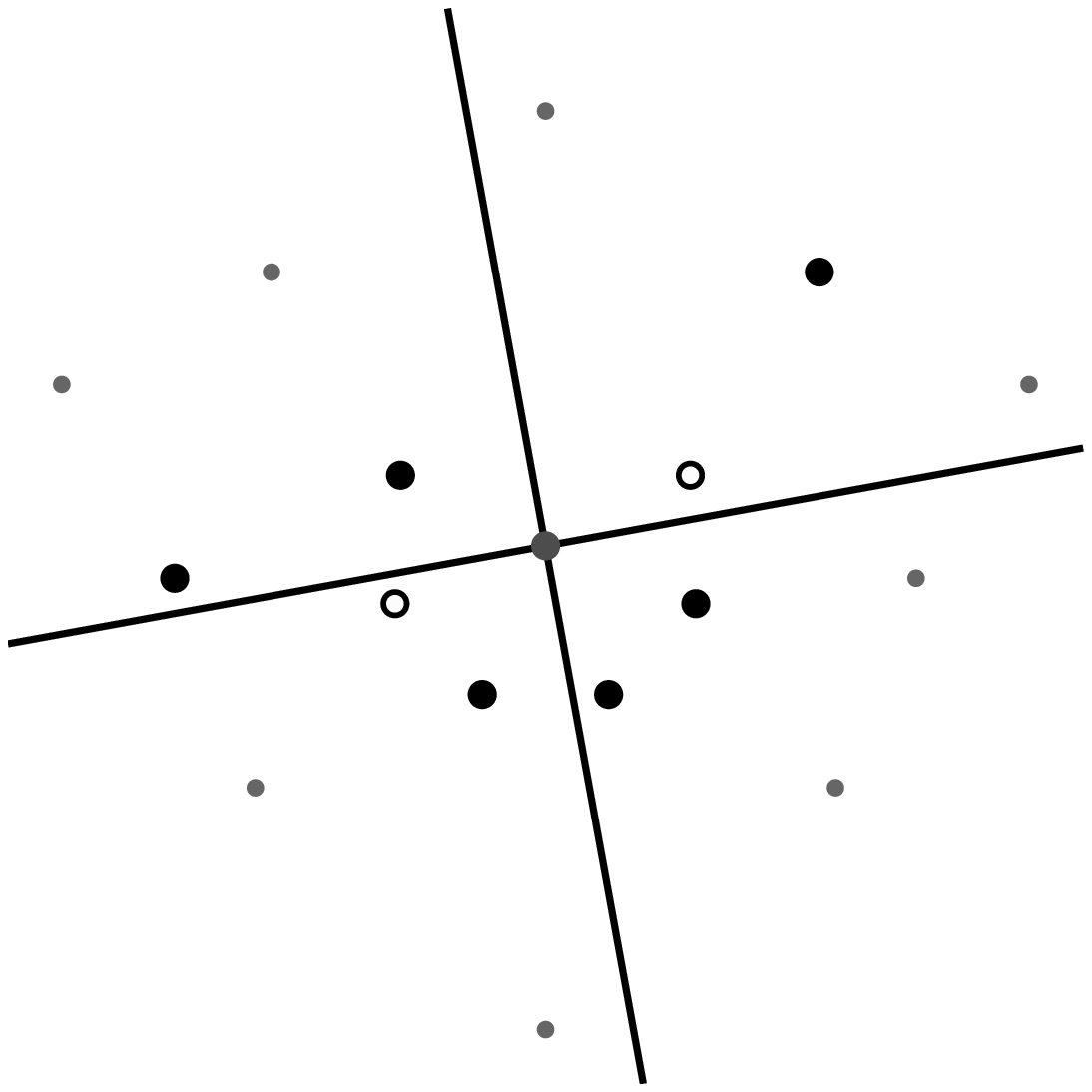}
\caption{Four quadrants}
\label{seibold:fig_neighborhood_four_quadrant}
\end{minipage}
\hfill
\begin{minipage}[t]{.23\textwidth}
\centering
\includegraphics[width=0.99\textwidth]{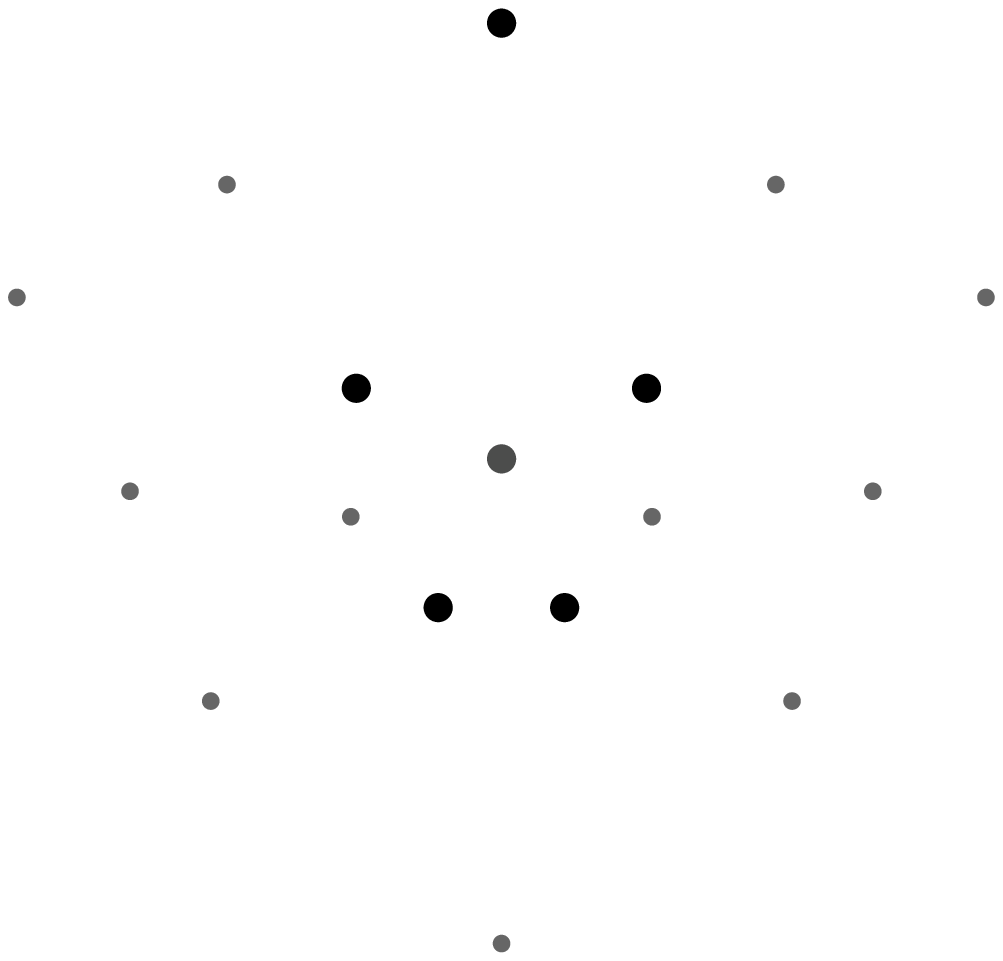}
\caption{MPS neighborhood}
\label{seibold:fig_neighborhood_pos_stencil}
\end{minipage}
\end{figure}

The stencil selected by the MPS method is shown in
Fig.~\ref{seibold:fig_neighborhood_pos_stencil}. It is minimal and positive,
here achieving this property by selecting one point further away.
The MPS method can be interpreted as a neighborhood criterion that is optimal
(i.e.~minimal and positive) with respect to the Laplace operator.
A deeper discussion of various neighborhood criteria in presented
in \cite{SeiboldDiss2006}.

\section{Conditions for the Existence of Positive Stencils}
\label{seibold:sec:lp_conditions_positive_stencil}
We investigate when the polyhedron \eqref{seibold:eq:polyhedron} is nonvoid,
i.e.~under which conditions positive stencils exist. We place the point of
approximation in the origin $\vec{x}_0=\vec{0}$. A set of neighbors
$\{\vec{x}_1,\dots,\vec{x}_m\}\subset\mathbb{R}^d$ is given, where $m\ge k$.
In order to establish a connection between the LP space $\mathbb{R}^m$ and the actual
geometry space $\mathbb{R}^d$ we consider the dual problem, which is defined in
$\mathbb{R}^k$.
\begin{thm}[Farkas' Lemma]
For a real matrix $A$ and a real vector $\vec{b}$, exactly one of the following two
systems has a solution:
\begin{itemize}
\item $A\cdot\vec{x}=\vec{b}$ for some $\vec{x}\ge 0$, or
\item $A^T\cdot\vec{w}\ge 0$ for some $\vec{w}$ satisfying $\vec{b}^T\cdot\vec{w}<0$.
\end{itemize}
\end{thm}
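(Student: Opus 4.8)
The plan is to prove the dichotomy in two parts: that the two alternatives cannot hold simultaneously, and that at least one of them must hold. The first part is a short computation. Suppose both systems were solvable, with $\vec{x}\ge 0$ satisfying $A\vec{x}=\vec{b}$ and $\vec{w}$ satisfying $A^T\vec{w}\ge 0$ together with $\vec{b}^T\vec{w}<0$. Then
\begin{equation*}
0 > \vec{b}^T\vec{w} = \prn{A\vec{x}}^T\vec{w} = \vec{x}^T\prn{A^T\vec{w}} \ge 0,
\end{equation*}
since the pairing of two nonnegative vectors is nonnegative. This contradiction rules out simultaneous solvability, so at most one alternative holds.

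The substance of the lemma is the converse: if the first system has no nonnegative solution, the second must be solvable. Here I would translate the problem into geometry. Let $\vec{a}_1,\dots,\vec{a}_m$ denote the columns of $A$ and consider the cone they generate,
\begin{equation*}
C = \{A\vec{x} : \vec{x}\ge 0\} = \{x_1\vec{a}_1+\dots+x_m\vec{a}_m : x_j\ge 0\}.
\end{equation*}
By construction, the first system is solvable exactly when $\vec{b}\in C$; so assume $\vec{b}\notin C$. Applying the separating hyperplane theorem to the closed convex set $C$ and the exterior point $\vec{b}$ produces a vector $\vec{w}$ and a scalar $\gamma$ with $\vec{w}^T\vec{b}<\gamma\le\vec{w}^T\vec{y}$ for every $\vec{y}\in C$. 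Since $\vec{0}\in C$ we get $\gamma\le 0$, hence $\vec{b}^T\vec{w}<0$; and since $C$ is a cone, scaling any $\vec{y}\in C$ and letting $t\to\infty$ in $\vec{w}^T(t\vec{y})\ge\gamma$ forces $\vec{w}^T\vec{y}\ge 0$ throughout $C$. Evaluating at $\vec{y}=\vec{a}_j$ for each column yields $A^T\vec{w}\ge 0$, which is precisely the second alternative.

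The hard part is the fact used implicitly in the separation step: that the finitely generated cone $C$ is \emph{closed}. Convexity and the cone property are immediate, but $C$ is the image of the closed set $\{\vec{x}\ge 0\}$ under a linear map, and such images need not be closed in general. I would isolate this as a lemma and prove it by Fourier--Motzkin elimination, which rewrites $C$ as a finite intersection of closed half-spaces and therefore exhibits it as a closed (polyhedral) set. An alternative is an induction on $m$ using Carath\'eodory's theorem to represent every point of $C$ as a nonnegative combination of at most $d$ linearly independent columns, after which a compactness argument shows that limits of sequences in $C$ remain in $C$. Once closedness is secured, the separation argument above is entirely routine, and the two parts together give the stated exclusive dichotomy.
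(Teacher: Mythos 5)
Your argument is correct and complete in outline. The easy direction (mutual exclusivity) is the standard one-line computation, and the hard direction via separation of the point $\vec{b}$ from the finitely generated cone $C=\{A\vec{x}:\vec{x}\ge 0\}$ is sound: you correctly extract $\vec{b}^T\vec{w}<0$ from $\vec{0}\in C$ and $A^T\vec{w}\ge 0$ from the cone property, and -- crucially -- you identify the closedness of $C$ as the only non-trivial ingredient rather than taking it for granted, which is where informal write-ups of this proof usually go wrong. Both of your proposed proofs of closedness (Fourier--Motzkin, or Carath\'eodory plus the observation that the cone over a linearly independent subfamily is closed, making $C$ a finite union of closed sets) are standard and work. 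Note, however, that the paper does not prove the lemma at all; it simply cites Vanderbei's textbook, where Farkas' lemma is obtained as a corollary of strong LP duality, itself established via termination of the simplex method. So your route is genuinely different from the one the paper points to: the convex-separation proof is self-contained and geometric, at the cost of the closedness lemma, whereas the LP-duality route gets the statement essentially for free once the simplex machinery (which the paper uses anyway in Sect.~\ref{subsec:solving_lp}) is in place.
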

\begin{proof}
The proof is given in \cite{Vanderbei2001}.
\end{proof}
Applying Farkas' lemma to our problem yields that system
$V\cdot\vec{s}=\vec{b}$ has no solution $\vec{s}\ge 0$, if and only if system
$V^T\cdot\vec{w}\ge 0$ has a solution satisfying $\vec{b}^T\cdot\vec{w}<0$.
The $i^{th}$ component of $V^T\cdot\vec{w}$ can be written as
\begin{equation*}
\prn{V^T\cdot\vec{w}}_i = \vec{a}^T\cdot\vec{x}_i+\vec{x}_i^T\cdot A\cdot\vec{x}_i\;,
\end{equation*}
where $\vec{a}=\prn{w_1,\dots,w_d}^T$ and $A$ is the symmetric matrix
\begin{equation*}
A = \prn{\begin{array}{cc}
w_4 & w_3 \\
w_3 & w_5 \\
\end{array}}
\ \mathrm{(2d)}
\quad\mathrm{resp.}\quad
A = \prn{\begin{array}{ccc}
w_7 & w_4 & w_5 \\
w_4 & w_8 & w_6 \\
w_5 & w_6 & w_9 \\
\end{array}}
\ \mathrm{(3d)}\;.
\end{equation*}
Given $\vec{w}$ (respectively $\vec{a}$ and $A$), we consider the quadratic form
$f(\vec{x}) = \vec{a}^T\cdot\vec{x}+\vec{x}^T\cdot A\cdot\vec{x}$.
Since $A$ is symmetric, an orthogonal matrix $S\in O(d)$ exists, such that
$S^TAS = D$, where $D=\mathrm{diag}\prn{\lambda_1,\dots,\lambda_d}$. In the new
coordinates, with $\vec{d} = S^T\vec{a}$, we define
\begin{equation}
g(\vec{x}) = f(S\vec{x}) = \vec{d}^T\cdot\vec{x}+\vec{x}^T\cdot D\cdot\vec{x}\;.
\label{seibold:eq:pos_stencil_function_d}
\end{equation}
If all eigenvalues $\lambda_i\neq 0$, then $D$ is regular.
With $\vec{c} = -\frac{1}{2}D^{-1}\vec{d}$ we can write
\begin{equation*}
g(\vec{x}) = (\vec{x}-\vec{c})^T\cdot D
\cdot (\vec{x}-\vec{c})-\vec{c}^T\cdot D\cdot\vec{c}\;,
\end{equation*}
If one or two $\lambda_i=0$, we are in a degenerate case, and stick to the
representation with $\vec{d}$ as parameter. Choosing $\vec{w}\in\mathbb{R}^k$
arbitrarily is equivalent to choosing $S\in O(d)$,
$\vec{\lambda}=\prn{\lambda_1,\dots,\lambda_d}\in\mathbb{R}^d$ and
$\vec{c}\in\mathbb{R}^d$ (respectively $\vec{d}\in\mathbb{R}^d$) arbitrarily.
For any $\vec{\lambda},\vec{c}\in\mathbb{R}^d$ define the domain
\begin{equation*}
H_{\vec{\lambda},\vec{c}} = \{\vec{x}\in\mathbb{R}^d:g(\vec{x})\ge 0\}\;.
\end{equation*}
For a set of points $X=\{\vec{x}_1,\dots,\vec{x}_m\}$ define
$SX=\{S\vec{x}_1,\dots,S\vec{x}_m\}$. Farkas' lemma translates to
\begin{cor}
\label{seibold:cor:pos_stencil_geometric_criterion}
System
$V\cdot\vec{s}=\vec{b}$ has no solution $\vec{s}\ge 0$, if and only if $S\in O(d)$,
$\vec{c},\vec{\lambda}\in\mathbb{R}^d$ with $\sum_{i=1}^d\lambda_i<0$ exist, such
that $SX\subset H_{\vec{\lambda},\vec{c}}$.
\end{cor}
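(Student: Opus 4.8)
The plan is to read the Corollary off directly from Farkas' Lemma, once the dual vector $\vec{w}\in\mathbb{R}^k$ has been translated into the geometric parameters $S$, $\vec{\lambda}$, and $\vec{c}$. By Farkas' Lemma, the system $V\cdot\vec{s}=\vec{b}$ has no solution $\vec{s}\ge 0$ precisely when there exists $\vec{w}$ with $V^T\cdot\vec{w}\ge 0$ and $\vec{b}^T\cdot\vec{w}<0$. First I would record that the $i$-th entry of $V^T\cdot\vec{w}$ is exactly the value $f(\vec{x}_i)$ of the quadratic form introduced above, where $\vec{a}$ collects the first-order dual components of $\vec{w}$ and $A$ its second-order ones. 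Hence $V^T\cdot\vec{w}\ge 0$ is equivalent to $f(\vec{x}_i)\ge 0$ for every neighbor, i.e.~$X\subset\{f\ge 0\}$.

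The decisive step is to interpret the objective inequality $\vec{b}^T\cdot\vec{w}<0$. Since $\vec{b}$ has nonzero entries only in the two (three, in 3d) positions corresponding to the pure second-order monomials $\bar x_i^2,\bar y_i^2$ (and $\bar z_i^2$), the product $\vec{b}^T\cdot\vec{w}$ equals twice the sum of the diagonal entries of $A$, i.e.~$\vec{b}^T\cdot\vec{w}=2\,\mathrm{tr}(A)$. Because the trace is invariant under the orthogonal change of basis $S^TAS=D$, we have $\mathrm{tr}(A)=\sum_{i=1}^d\lambda_i$, so that $\vec{b}^T\cdot\vec{w}<0$ becomes exactly the condition $\sum_{i=1}^d\lambda_i<0$ of the statement. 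I expect this trace identification to be the conceptual heart of the argument: it is what ties the abstract dual objective to the purely geometric quantity $\sum_{i=1}^d\lambda_i$, and everything else is Farkas plus a change of coordinates.

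It remains to pass from $f$ to the normalized form $g$. Using $g(\vec{x})=f(S\vec{x})$, and hence $f(\vec{x})=g(S^T\vec{x})$ since $S$ is orthogonal, the condition $f(\vec{x}_i)\ge 0$ becomes $S^T\vec{x}_i\in H_{\vec{\lambda},\vec{c}}$, i.e.~$S^TX\subset H_{\vec{\lambda},\vec{c}}$; as $S$ is quantified existentially over $O(d)$, replacing $S$ by $S^T$ yields the stated form $SX\subset H_{\vec{\lambda},\vec{c}}$. Finally I would verify that ranging $\vec{w}$ over $\mathbb{R}^k$ is equivalent to ranging the triple $(S,\vec{\lambda},\vec{c})$ over its admissible set: the assignment $\vec{w}\mapsto(\vec{a},A)$ is a linear bijection onto $\mathbb{R}^d\times\mathrm{Sym}(d)$ (the dimensions match, $d+\tfrac{d(d+1)}{2}=k$), and the spectral theorem makes $(S,\vec{\lambda})\mapsto A$ and $(S,\vec{\lambda},\vec{c})\mapsto\vec{a}$ surjective. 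The only wrinkle is the degenerate case where some $\lambda_i=0$, in which $\vec{c}=-\tfrac12 D^{-1}\vec{d}$ is undefined; there one simply parametrizes by $\vec{d}$ instead of $\vec{c}$, exactly as the text already anticipates, and the argument is unchanged. Combining the three equivalences gives the Corollary.
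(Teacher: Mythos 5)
Your proposal is correct and follows essentially the same route the paper takes: Farkas' lemma, the identification $(V^T\vec{w})_i=f(\vec{x}_i)$, diagonalization of $A$, and the observation that ranging over $\vec{w}\in\mathbb{R}^k$ is the same as ranging over $(S,\vec{\lambda},\vec{c})$ (or $\vec{d}$ in the degenerate case). You are in fact more explicit than the paper on the one step it leaves implicit, namely that $\vec{b}^T\cdot\vec{w}=2\,\mathrm{tr}(A)=2\sum_i\lambda_i$, which is exactly the right justification for the condition $\sum_i\lambda_i<0$.
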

In other words, no positive Laplace stencil exists, iff the set of points $X$ can
be transformed (via $S\in O(d)$), such that it is contained in the set
$H_{\vec{\lambda},\vec{c}}$ for some $\vec{c},\vec{\lambda}\in\mathbb{R}^d$ with
$\sum_{i=1}^d\lambda_i<0$.

\begin{figure}
\centering
\begin{minipage}[t]{.28\textwidth}
\centering
\includegraphics[width=0.81\textwidth]{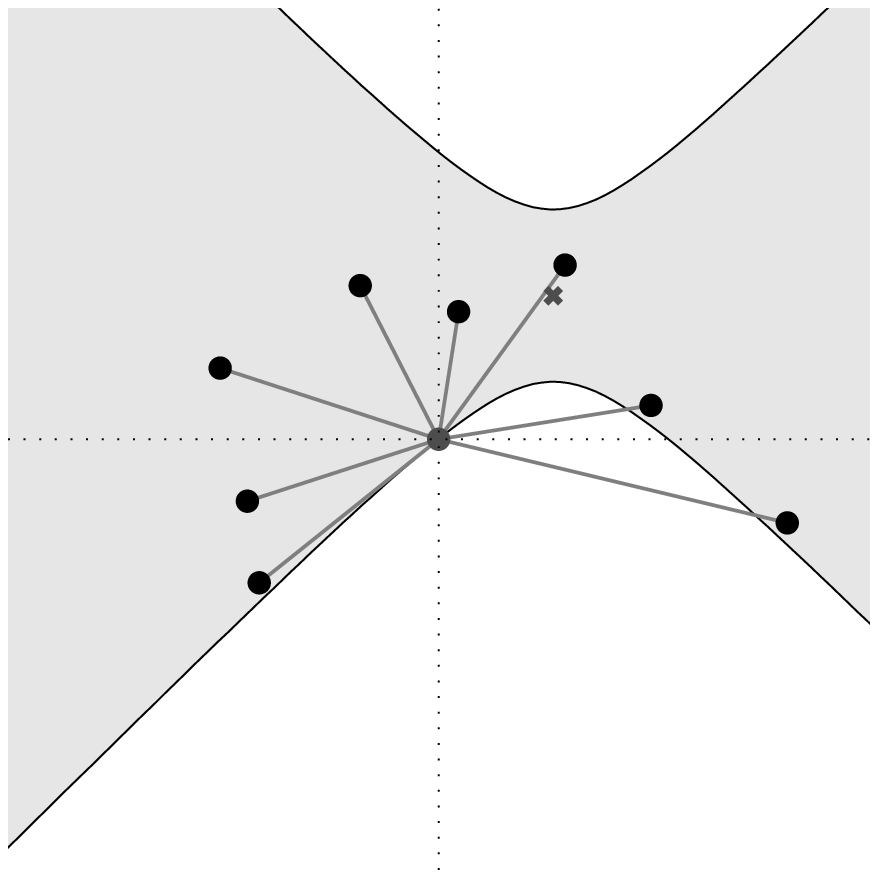}
\caption{No positive stencil exists}
\label{seibold:fig_proof_pos_stencil_2d_points_neg}
\end{minipage}
\hfill
\begin{minipage}[t]{.28\textwidth}
\centering
\includegraphics[width=0.81\textwidth]{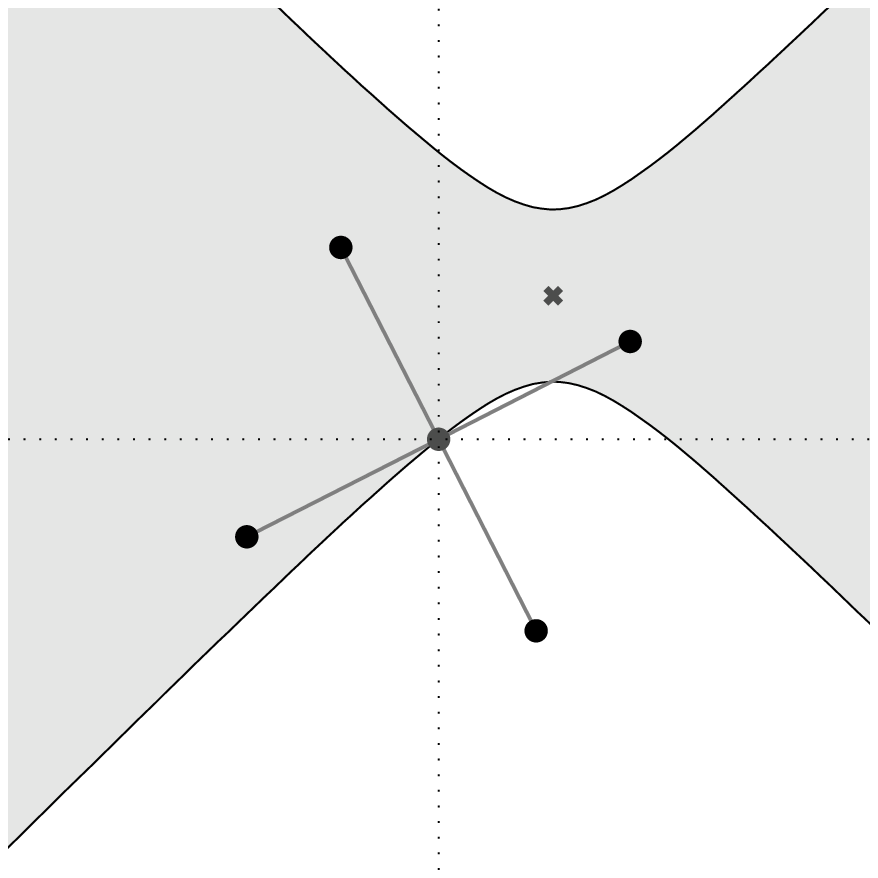}
\caption{Positive stencil}
\label{seibold:fig_proof_pos_stencil_2d_points_pos}
\end{minipage}
\hfill
\begin{minipage}[t]{.28\textwidth}
\centering
\includegraphics[width=0.81\textwidth]{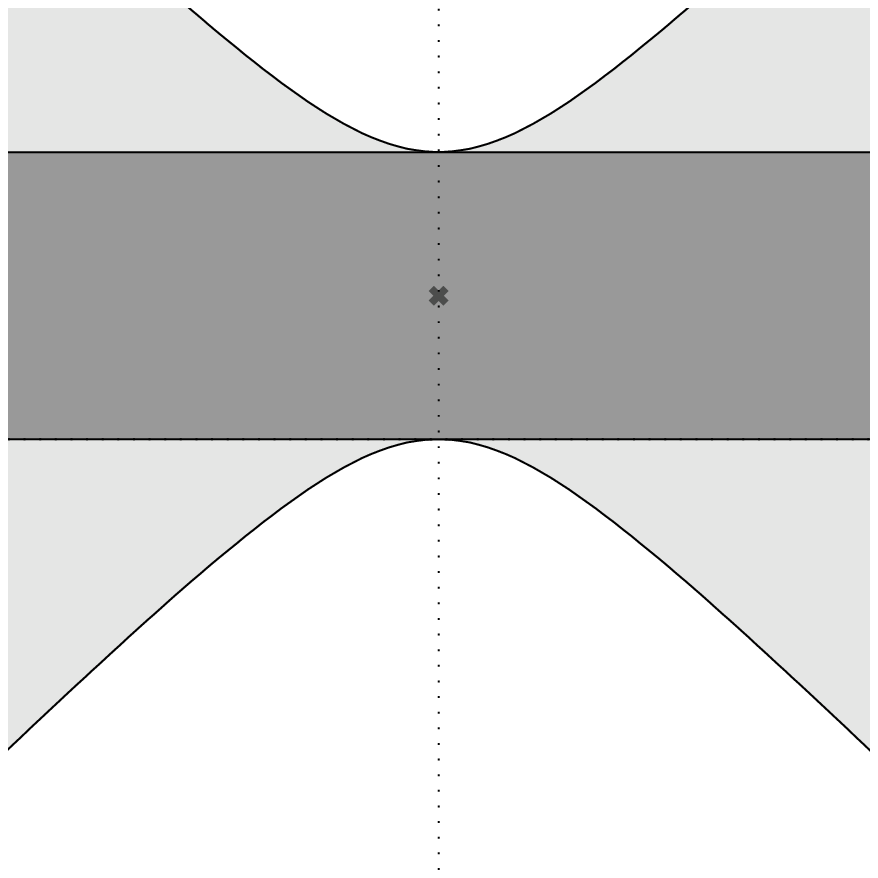}
\caption{Necessary criterion}
\label{seibold:fig_proof_pos_stencil_2d_necc}
\end{minipage}
\end{figure}

\begin{ex}
The setup in Fig.~\ref{seibold:fig_proof_pos_stencil_2d_points_neg} shows a set of
points that is completely contained in a domain $H_{\vec{\lambda},\vec{c}}$. Due to
Cor.~\ref{seibold:cor:pos_stencil_geometric_criterion}, no positive stencil exists.
\end{ex}
\begin{ex}
The setup in Fig.~\ref{seibold:fig_proof_pos_stencil_2d_points_pos} shows a point
setup which has a positive stencil solution. It is impossible to find a domain
$H_{\vec{\lambda},\vec{c}}$ and to rotate the set of points, such that all points
are contained in the domain.
\end{ex}
We have derived a geometric condition, which is equivalent to the existence of
positive stencils. However, due to the nonlinearity in $g$, it is difficult to
translate into geometric means directly. Instead, we derive a necessary (but not
sufficient) as well as a sufficient (but not necessary) criterion on the point
geometry for the existence of a positive Laplace stencil. To our knowledge the
latter has not been given yet.

\subsection{A Necessary Criterion for Positive Stencils}
If $V\cdot\vec{s}=\vec{b}$ has a solution $\vec{s}\ge 0$, then for any
$S\in O(d)$, $\vec{c},\vec{\lambda}\in\mathbb{R}^d$ with $\sum_{i=1}^d\lambda_i<0$,
there is a point $\vec{x}_i$ with $S\vec{x}_i\notin H_{\vec{\lambda},\vec{c}}$.
For the particular choice $\lambda_1=-1$, $\lambda_i=0 \ \forall i>1$ and
$c_1\gg\max_i\|\vec{x}_i\|$ it follows that for any $S\in O(d)$ at least one
point must satisfy $x_1<0$. This yields the following
\begin{thm}
\label{seibold:thm:mps_pos_stencil_necessary}
If a set of points $X\subset\mathbb{R}^d$ around the origin admits a positive
Laplace stencil, then they must not lie in one and the same half space (with
respect to an arbitrary hyperplane through the origin).
\end{thm}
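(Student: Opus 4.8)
The plan is to prove the contrapositive through the geometric criterion of Corollary~\ref{seibold:cor:pos_stencil_geometric_criterion}, which has already repackaged Farkas' lemma into a statement about the regions $H_{\vec{\lambda},\vec{c}}$. Concretely, I would assume that a positive stencil exists, so that $V\cdot\vec{s}=\vec{b}$ has a solution $\vec{s}\ge 0$. By the corollary this rules out the existence of \emph{any} triple $S\in O(d)$, $\vec{\lambda},\vec{c}\in\mathbb{R}^d$ with $\sum_{i=1}^d\lambda_i<0$ for which $SX\subset H_{\vec{\lambda},\vec{c}}$. Equivalently, for every such admissible triple at least one index $i$ satisfies $S\vec{x}_i\notin H_{\vec{\lambda},\vec{c}}$, i.e.\ at least one point escapes the region $\{g\ge 0\}$. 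The strategy is then to feed the corollary a single cleverly chosen \emph{degenerate} region that is, up to a harmless far face, a half space through the origin, and to read off what escaping it forces.

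The key construction is to take $\lambda_1=-1$ and $\lambda_i=0$ for $i>1$, so that $D=\mathrm{diag}(-1,0,\dots,0)$ and $\sum_i\lambda_i=-1<0$, as required. This lands in the degenerate case, so I keep $\vec{d}$ as the active parameter and choose it supported on the first coordinate, $d_1=2c_1$ and $d_i=0$ otherwise. Completing the square in the $x_1$ direction alone gives $g(\vec{x})=2c_1x_1-x_1^2=c_1^2-(x_1-c_1)^2$, whence $H_{\vec{\lambda},\vec{c}}=\{\vec{x}:0\le x_1\le 2c_1\}$ is a slab in the $x_1$ direction. The essential trick is to pick $c_1\gg\max_i\|\vec{x}_i\|_2$: every point then trivially satisfies $x_1\le\|\vec{x}_i\|_2<c_1<2c_1$, so the upper face of the slab can never be the face a point escapes through. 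Hence the escaping index guaranteed above must violate the lower face, i.e.\ the rotated point satisfies $(S\vec{x}_i)_1<0$.

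Finally I would translate the rotation back into a statement about hyperplanes. Writing $\vec{v}=S^T\vec{e}_1$, the relation $(S\vec{x}_i)_1<0$ reads $\vec{v}^T\vec{x}_i<0$, and as $S$ ranges over $O(d)$ the normal $\vec{v}$ ranges over all unit vectors. Thus for \emph{every} hyperplane through the origin at least one point lies strictly on its negative side, which is precisely the assertion that the points cannot all lie in one and the same (closed) half space. I expect the only genuine subtlety to be the bookkeeping of the degenerate case: because $D$ is singular the representation $\vec{c}=-\tfrac12 D^{-1}\vec{d}$ is unavailable, and one must complete the square in the single nondegenerate direction while carrying $\vec{d}$ explicitly. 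The second point requiring care is the strictness of $x_1<0$: it is exactly the choice $c_1\gg\max_i\|\vec{x}_i\|_2$ that excludes escape through the far face and thereby upgrades ``not contained in the slab'' into the clean half space conclusion.
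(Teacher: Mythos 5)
Your argument is correct and is essentially the paper's own proof: the paper likewise specializes Corollary~\ref{seibold:cor:pos_stencil_geometric_criterion} to $\lambda_1=-1$, $\lambda_i=0$ for $i>1$, and $c_1\gg\max_i\|\vec{x}_i\|$ to force an escaping point with $x_1<0$ for every $S\in O(d)$. Your version merely fills in the details the paper leaves implicit (the slab $\{0\le x_1\le 2c_1\}$ from the degenerate $\vec{d}$-representation, the exclusion of the far face, and the identification $\vec{v}=S^T\vec{e}_1$), all of which are handled correctly.
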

This result is well known \cite{DuarteLiszkaTworzyako1996}. Due to the
particular choice of $\vec{\lambda}$, this criterion is very crude, but easy to
formulate in geometric means. More careful estimates of the condition of
Cor.~\ref{seibold:cor:pos_stencil_geometric_criterion} may yield stricter criteria.

\begin{figure}
\centering
\begin{minipage}[t]{.23\textwidth}
\centering
\includegraphics[width=0.99\textwidth]{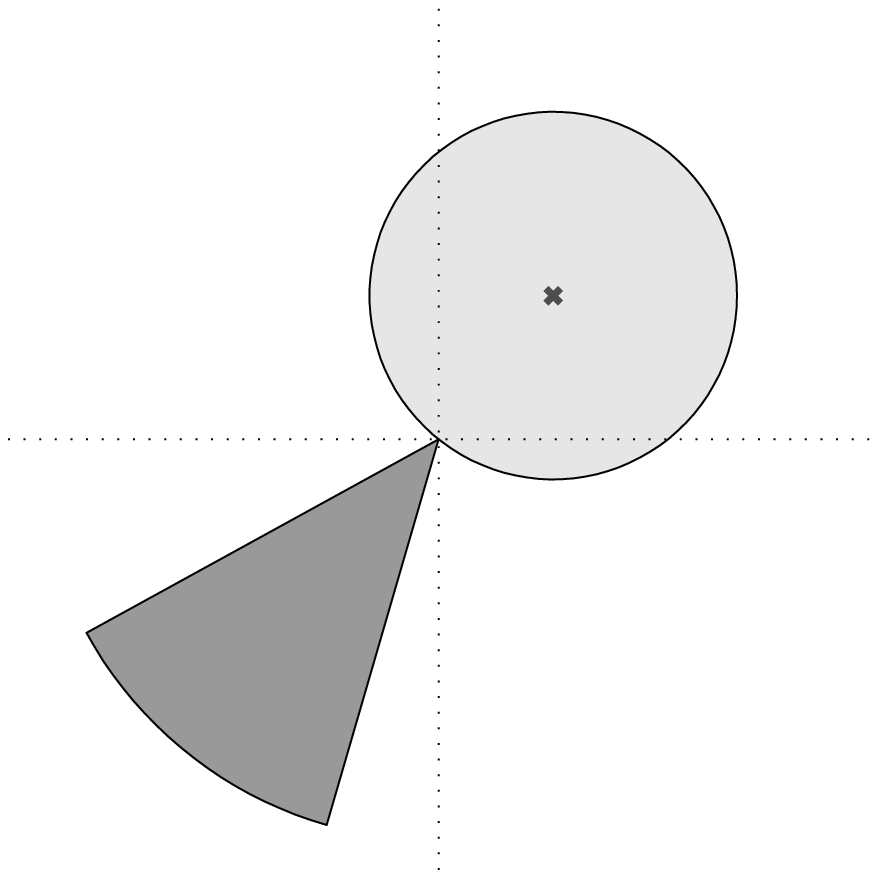}
\caption{Case $(--)$}
\label{fig_proof_pos_stencil_2d_mm}
\end{minipage}
\hfill
\begin{minipage}[t]{.23\textwidth}
\centering
\includegraphics[width=0.99\textwidth]{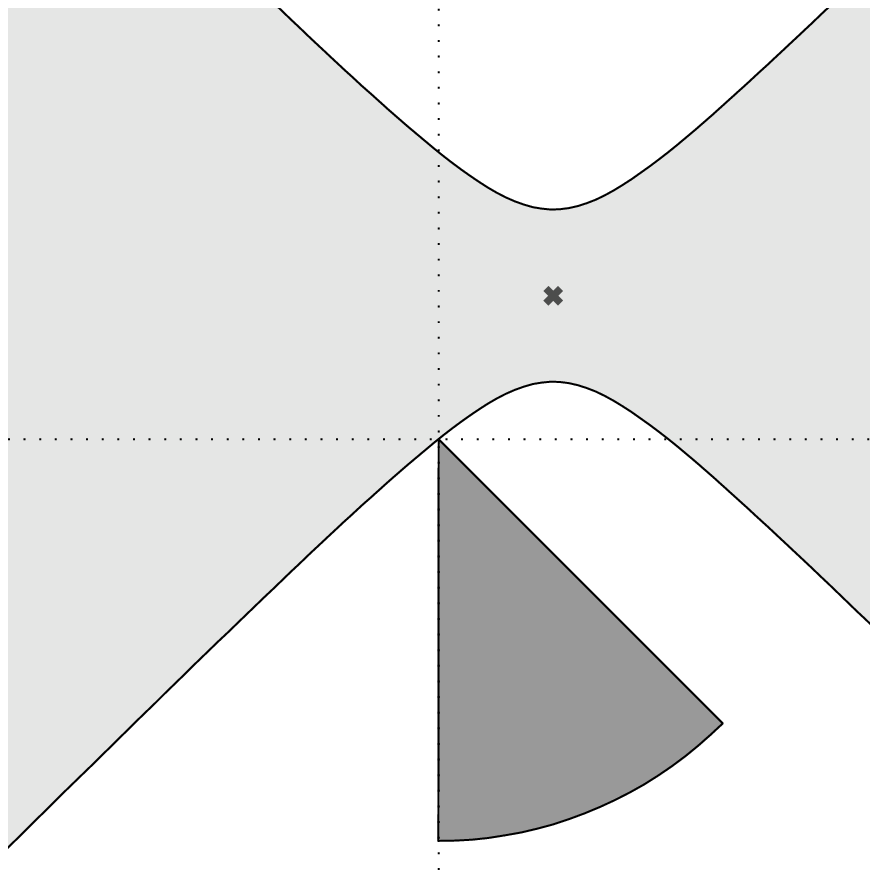}
\caption{Case $(+-)$}
\label{fig_proof_pos_stencil_2d_pm}
\end{minipage}
\hfill
\begin{minipage}[t]{.23\textwidth}
\centering
\includegraphics[width=0.99\textwidth]{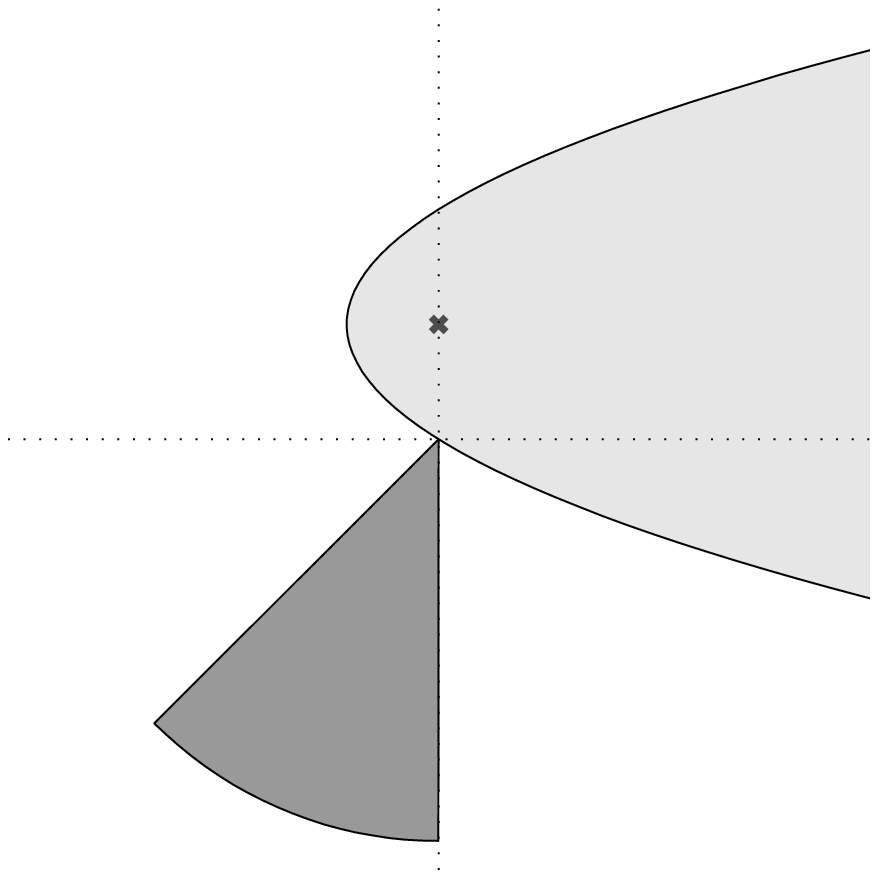}
\caption{Case $(0-)$ type 1}
\label{fig_proof_pos_stencil_2d_zm1}
\end{minipage}
\hfill
\begin{minipage}[t]{.23\textwidth}
\centering
\includegraphics[width=0.99\textwidth]{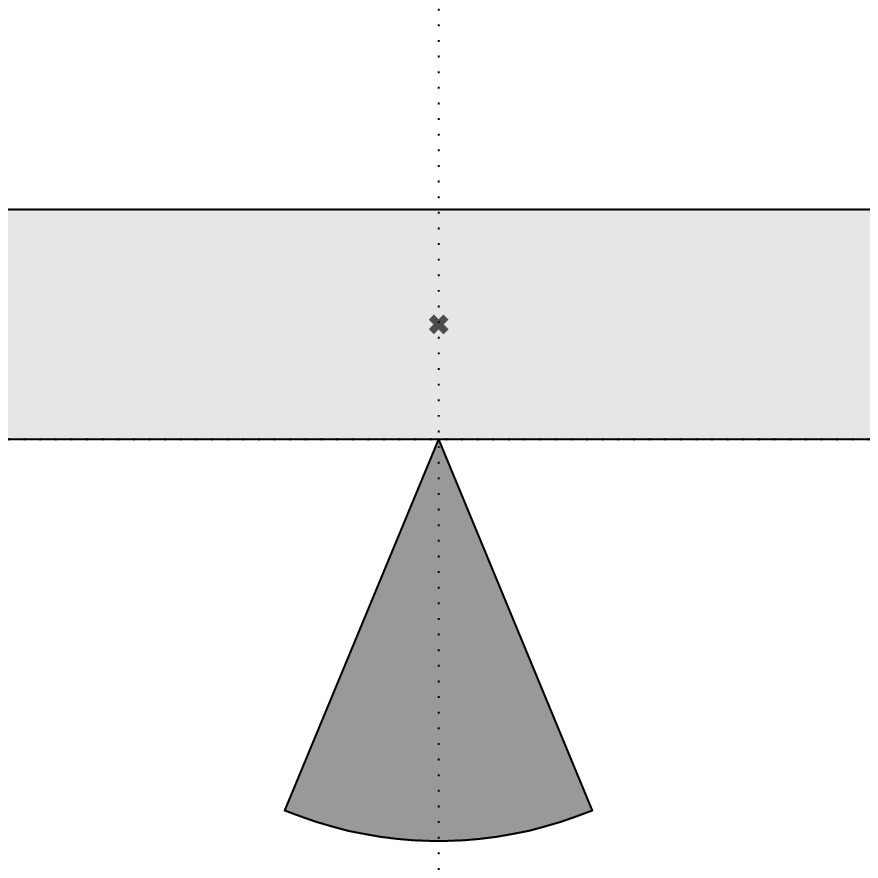}
\caption{Case $(0-)$ type 2}
\label{fig_proof_pos_stencil_2d_zm2}
\end{minipage}
\end{figure}

\subsection{A Sufficient Criterion for Positive Stencils}
For any $\vec{c},\vec{\lambda}\in\mathbb{R}^d$ with $\sum_{i=1}^d\lambda_i<0$
we construct a domain $G_{\vec{\lambda},\vec{c}}\supset H_{\vec{\lambda},\vec{c}}$,
which is $\mathbb{R}^d$ aside from a cone centered at the origin. If for any
$\vec{c},\vec{\lambda}\in\mathbb{R}^d$, $S\in O(d)$ there is at least one point
$S\vec{x}_i\notin G_{\vec{\lambda},\vec{c}}$, then
$S\vec{x}_i\notin H_{\vec{\lambda},\vec{c}}$, thus a positive Laplace stencil exists.
We call this criterion \emph{cone criterion}.
\begin{thm}[Cone criterion in 2d]
\label{seibold:thm:pos_stencil_domain2d}
Let $\vec{c},\vec{\lambda}\in\mathbb{R}^2$ with $\lambda_1+\lambda_2<0$.
There exists always a cone $C_{\vec{v}}$ defined by
$\vec{v}\cdot\vec{x}>\frac{1}{\sqrt{1+\beta^2}}\|\vec{x}\|$,
where $\beta=\sqrt{2}-1$ (a cone with total opening angle 45\ensuremath{^\circ},
the direction vector $\vec{v}$ depends on $\vec{\lambda}$ and $\vec{c}$),
such that $G_{\vec{\lambda},\vec{c}} = \mathbb{R}^d \setminus C_{\vec{v}}$
satisfies $H_{\vec{\lambda},\vec{c}}\subset G_{\vec{\lambda},\vec{c}}$.
\end{thm}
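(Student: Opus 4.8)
The plan is to translate the inclusion $H_{\vec{\lambda},\vec{c}}\subset G_{\vec{\lambda},\vec{c}}=\mathbb{R}^2\setminus C_{\vec{v}}$ into a purely \emph{directional} statement and then show the admissible directions always fill an arc wide enough to hold a $45^\circ$ cone. Since $g$ is already written in diagonal form $g(\vec{x})=\lambda_1 x_1^2+\lambda_2 x_2^2+d_1 x_1+d_2 x_2$, I would write $\vec{x}=\rho\vec{u}$ with $\rho>0$, $\|\vec{u}\|=1$, and factor $g(\rho\vec{u})=\rho\bigl(\rho\,Q(\vec{u})+L(\vec{u})\bigr)$ with $Q(\vec{u})=\vec{u}^T D\vec{u}$ and $L(\vec{u})=\vec{d}\cdot\vec{u}$. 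Because $\rho>0$, the entire ray lies in $\{g<0\}$ exactly when $Q(\vec{u})<0,\ L(\vec{u})\le 0$, or $Q(\vec{u})=0,\ L(\vec{u})<0$; any $\vec{u}$ with $Q>0$, or with $Q<0<L$ (where the ray starts positive near the origin), fails. Call such $\vec{u}$ \emph{admissible}. As $C_{\vec{v}}$ is the union of the rays whose directions lie within $22.5^\circ$ of $\vec{v}$, the desired inclusion is equivalent to every such direction being admissible, so it suffices to exhibit an arc of admissible directions wider than $45^\circ$ and to take $\vec{v}$ at its centre.

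Next I would split by the signs of $\lambda_1,\lambda_2$, which under $\lambda_1+\lambda_2<0$ are $(--)$, $(+-)$ and $(0-)$ up to swapping axes, matching Figs.~\ref{fig_proof_pos_stencil_2d_mm}--\ref{fig_proof_pos_stencil_2d_zm2}. In the case $(--)$ one has $Q(\vec{u})<0$ for \emph{all} $\vec{u}$, so the admissible set is the closed half-circle $\{L\le 0\}$ (or all of $S^1$ when $\vec{d}=\vec{0}$), an arc of $180^\circ$ that trivially contains a $45^\circ$ sub-arc. In the degenerate case $(0-)$, say $\lambda_1=0>\lambda_2$, one has $Q(\vec{u})=\lambda_2 u_2^2\le 0$ with equality only at $\vec{u}=\pm\vec{e}_1$; the admissible set is again essentially the half-circle $\{L\le 0\}$ with at most the two directions $\pm\vec{e}_1$ deleted, still far wider than $45^\circ$. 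The two subtypes (Fig.~\ref{fig_proof_pos_stencil_2d_zm1} vs.~\ref{fig_proof_pos_stencil_2d_zm2}) differ only in whether $\vec{d}$ has a component along the null direction $\vec{e}_1$, and are handled identically.

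The main work, and the step that pins down $\beta=\sqrt{2}-1$, is the case $(+-)$: say $\lambda_1>0>\lambda_2$, where $\lambda_1+\lambda_2<0$ forces $\lambda_1<-\lambda_2$. Here $Q(\vec{u})<0$ holds exactly on two antipodal arcs, each of angular width $w=\pi-2\phi_0$ with $\tan\phi_0=\sqrt{\lambda_1/(-\lambda_2)}$; the strict inequality $\lambda_1<-\lambda_2$ gives $\phi_0<45^\circ$, hence $w>90^\circ$. On one such arc $A$ the zero set $\{L=0\}$ consists of two antipodal points, so, since $A$ is shorter than a semicircle, $L$ can change sign at most once on $A$. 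If $L$ keeps a constant sign on $A$, then either $A$ or its antipode $A'=-A$ lies entirely in $\{L\le 0\}$ and is fully admissible, an arc of width $w>90^\circ$. If $L$ changes sign, it splits $A$ into an admissible piece of width $w_1$ and a non-admissible piece of width $w-w_1$. Because $Q$ is even and $L$ is odd, the admissible piece of $A'$ is exactly the reflection of the non-admissible piece of $A$, hence has width $w-w_1$; the two admissible pieces then total $w>90^\circ$, so the larger exceeds $45^\circ$.

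Centring $\vec{v}$ in the admissible arc produced in each case, and taking $C_{\vec{v}}$ with half-angle $22.5^\circ$ (equivalently $\cos 22.5^\circ=1/\sqrt{1+\beta^2}$ for $\beta=\sqrt{2}-1$), yields $C_{\vec{v}}\subset\{g<0\}$ and therefore $H_{\vec{\lambda},\vec{c}}\subset\mathbb{R}^2\setminus C_{\vec{v}}=G_{\vec{\lambda},\vec{c}}$. I expect the antipodal-balancing argument in the $(+-)$ case to be the crux. It also explains the sharpness of the constant: as $\lambda_1+\lambda_2\to 0^-$ one has $w\to 90^\circ$ and the guaranteed admissible arc shrinks to $45^\circ$, so no larger universal opening angle can be claimed.
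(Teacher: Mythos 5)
Your proof is correct, and it reaches the theorem by a genuinely different mechanism than the paper's. The paper uses the same top-level case split on the signs $(--)$, $(+-)$, $(0-)$, but in each case it first normalizes the signs of $\vec{c}$ (respectively $\vec{d}$) ``due to symmetry'' and then verifies by a direct algebraic estimate that $g<0$ on one \emph{explicitly named} $45^\circ$ sector, e.g.\ $B=\{x_1>0,\ x_2<0,\ |x_1|<|x_2|\}$ with a concrete direction vector $\vec{v}$ in the case $(+-)$. You instead factor $g$ radially as $g(\rho\vec{u})=\rho\bigl(\rho\,Q(\vec{u})+L(\vec{u})\bigr)$, reduce the inclusion $H_{\vec{\lambda},\vec{c}}\cap C_{\vec{v}}=\emptyset$ to the existence of a sufficiently wide arc of admissible directions on the unit circle, and in the critical case $(+-)$ obtain the $45^\circ$ bound from the parity argument: $Q$ is even and $L$ is odd, so the admissible portions of the antipodal pair $A$, $-A$ (each component of $\{Q<0\}$ having width $w=\pi-2\phi_0>90^\circ$) together have width $w$, whence one of them exceeds $45^\circ$. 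Your characterization of when a full ray avoids $H$ (namely $Q<0,\ L\le 0$ or $Q=0,\ L<0$) is correct, the three cases are exhaustive under $\lambda_1+\lambda_2<0$, and the degenerate subcases are all absorbed cleanly. What your route buys is a uniform treatment without sign normalizations, and a transparent explanation of why the constant is sharp ($w\to 90^\circ$ as $\lambda_1+\lambda_2\to 0^-$), which the paper relegates to a separate remark with an external reference. What it gives up is constructiveness: the paper's computation hands you the explicit bisector $\vec{v}$ of the excluded sector, which it then reuses when assembling the $3$d cones in Thm.~\ref{seibold:thm:pos_stencil_domain3d}, whereas your $\vec{v}$ is only located implicitly as the midpoint of whichever admissible arc turns out to be wide enough. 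Both proofs are complete; yours is the cleaner existence argument, the paper's the more explicit one.
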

\begin{proof}
We show that $H_{\vec{\lambda},\vec{c}}$ and $C_{\vec{v}}$ do not intersect. Since
the problem is invariant under interchanging coordinates, we can w.l.o.g.~assume
that $\lambda_2<0$. Including the degenerate case, three cases need to be considered:
\begin{itemize}
\item \textbf{Case $(--)$:} $\lambda_1<0$, $\lambda_2<0$ \\
The set $H_{\vec{\lambda},\vec{c}}$ is the interior of an ellipse centered at
$\vec{c}$ with $\vec{0}\in\partial H_{\vec{\lambda},\vec{c}}$.
The vector $\vec{v}=-(\frac{\lambda_2}{\lambda_1}c_1,\frac{\lambda_1}{\lambda_2}c_2)$
is an outer normal vector. The cone $C_{\vec{v}}$ touches the ellipse only at the
origin, as shown in Fig.~\ref{fig_proof_pos_stencil_2d_mm}.
\item \textbf{Case $(+-)$:} $\lambda_1>0$, $\lambda_2<0$ \\
Fig.~\ref{fig_proof_pos_stencil_2d_pm} shows the geometry.
Define $\mu_1 = \frac{|\lambda_1|}{|\lambda_2|}<1$. The domain
$H_{\vec{\lambda},\vec{c}}$ is defined by
$\tilde g(x_1,x_2) = \mu_1(x_1^2-2c_1x_1)-(x_2^2-2c_2x_2) \ge 0$.
Due to symmetry we can assume $c_1,c_2\ge 0$. For all $\vec{x}\in B$, where
$B=\{(x_1,x_2)|x_1>0,x_2<0,|x_1|<|x_2|\}$, the function $\tilde g$ satisfies
\begin{align*}
\tilde g(\vec{x}) &= \mu_1(|x_1|^2-2c_1|x_1|)-(|x_2|^2+2c_2|x_2|) \\
                  &< (\mu_1-1)|x_2|^2-2(\mu_1c_1|x_1|+c_2|x_2|) < 0\;,
\end{align*}
hence $H_{\vec{\lambda},\vec{c}}\cap B=\emptyset$.
The domain $B$ is a 2d cone with opening angle 45\ensuremath{^\circ},
where $\vec{v}=(\frac{1}{2}\sqrt{2-\sqrt{2}},\frac{1}{2}\sqrt{2+\sqrt{2}})$,
which proves the claim.
\item \textbf{Case $(0-)$:} $\lambda_1=0$, $\lambda_2<0$ \\
We use representation \eqref{seibold:eq:pos_stencil_function_d}.
Define $\mu_2=|\lambda_2|$. The domain $H_{\vec{\lambda},\vec{d}}$ is defined by
$g(x_1,x_2) = d_1x_1+d_2x_2-\mu_2x_2^2 \ge 0$. Due to symmetry we can assume that
$d_1,d_2\ge 0$. Two subcases have to be distinguished:
\begin{itemize}
\item \textbf{Case $d_1\neq 0$:} \\
The setup is shown in Fig.~\ref{fig_proof_pos_stencil_2d_zm1}.
For all $\vec{x}\in B$, where $B=\{(x_1,x_2)|x_1<0,x_2<0,|x_1|<|x_2|\}$, one has
$g(x_1,x_2) = -d_1|x_1|-d_2|x_2|-\mu_2x_2^2 < 0$,
hence $H_{\vec{\lambda},\vec{d}}\cap B=\emptyset$.
As above, the domain $B$ is a 2d cone with opening angle 45\ensuremath{^\circ}.
\item \textbf{Case $d_1=0$:} \\
The setup is shown in Fig.~\ref{fig_proof_pos_stencil_2d_zm2}.
The domain $g(x_1,x_2)\ge 0$ is the set $0\le x_2\le\frac{d_2}{\mu_2}$.
Any cone contained in the domain $x_2<0$ proves the claim.\vspace{-6mm}
\end{itemize}
\end{itemize}
\end{proof}
In other words, if any angle between two neighboring points (seen from the central
point) is no more than 45\ensuremath{^\circ}, then a positive stencil always exists.
\begin{rem}
The 2d cone criterion is sharp: For any $\varepsilon>0$ a point setup can be
constructed, such that all angles are less than $\frac{\pi}{4}+\varepsilon$, and a
positive stencil does not exist.
The construction is given in \cite[p.~136]{SeiboldDiss2006}. Note that the resulting
configurations are very unbalanced. Some points are very close to the central point,
others are far away. In practice, such extreme cases are typically avoided by the
construction and management of the point cloud, yielding positive stencils also for
angles significantly larger than 45\ensuremath{^\circ}.
\end{rem}

\begin{thm}[Cone criterion in 3d]
\label{seibold:thm:pos_stencil_domain3d}
Let $\vec{c},\vec{\lambda}\in\mathbb{R}^3$ with $\lambda_1+\lambda_2+\lambda_3<0$.
There exists always a cone $C_{\vec{v}}$ defined by
$\vec{v}\cdot\vec{x}>\frac{1}{\sqrt{1+\beta^2}}\|\vec{x}\|$,
where $\beta=\sqrt{\frac{1}{6}(3-\sqrt{6})}$ (a cone with total opening
angle 33.7\ensuremath{^\circ}), such that
$G_{\vec{\lambda},\vec{c}} = \mathbb{R}^d \setminus C_{\vec{v}}$ satisfies
$H_{\vec{\lambda},\vec{c}}\subset G_{\vec{\lambda},\vec{c}}$.
\end{thm}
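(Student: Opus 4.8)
The plan is to follow the template of the two-dimensional cone criterion (Thm.~\ref{seibold:thm:pos_stencil_domain2d}): I would show directly that $H_{\vec\lambda,\vec c}$ and a suitably oriented cone $C_{\vec v}$ are disjoint, so that $G_{\vec\lambda,\vec c}=\mathbb R^d\setminus C_{\vec v}$ contains $H_{\vec\lambda,\vec c}$. Working with the quadratic form $g$ of \eqref{seibold:eq:pos_stencil_function_d}, I would first exploit the available symmetries to normalize the problem: the reflection $x_i\mapsto -x_i$ lets me assume all $c_i\ge 0$ (respectively $d_i\ge 0$ in the degenerate directions), and permuting coordinates lets me order the eigenvalues. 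Since $\lambda_1+\lambda_2+\lambda_3<0$, at least one eigenvalue is negative, and I would arrange $\lambda_3<0$. This reduces the claim to a finite list of sign patterns, exactly as the 2d proof split into the cases $(--)$, $(+-)$, $(0-)$.

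Next I would carry out the case analysis on the signs of $\lambda_1,\lambda_2$. The definite case $(-,-,-)$ is the easy one: here $H_{\vec\lambda,\vec c}$ is the interior of an ellipsoid through the origin, so any cone opening into the outward-normal half-space at $\vec 0$ misses it and the angle is uncontested. The work lies in the indefinite cases $(+,-,-)$ and $(+,+,-)$ (together with their degenerate limits $(0,-,-)$, $(+,0,-)$, \dots), which play the role of the binding 2d case $(+-)$. In each, the idea is to place the cone axis $\vec v$ into the orthant where the sign choices force every linear contribution $-2\lambda_i c_i x_i$ to be nonpositive, i.e.\ $x_i>0$ in the positive-eigenvalue directions and $x_i<0$ in the negative ones, so that on that orthant $g$ is bounded above by its purely quadratic part $\sum_i\lambda_i x_i^2$. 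It then remains to intersect the orthant with the region $\{\sum_i\lambda_i x_i^2<0\}$ and to certify that a circular cone of the claimed half-angle fits inside, mirroring the chain of inequalities used in the 2d case.

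The main obstacle is precisely this last step. In two dimensions the admissible set is a planar wedge and its opening ($45^\circ$) can be read off from the defining inequalities. In three dimensions it is a genuine solid region bounded by coordinate planes together with the asymptotic cone $\sum_i\lambda_i x_i^2=0$, and I must inscribe a \emph{circular} cone. This forces an optimization: the half-angle $\theta$ of the largest inscribed cone is the minimum of its angular distances $\arcsin|\vec v\cdot\vec n|$ to the bounding planes and its angular margin to the quadric cone (which has half-angle $\gamma$ about the pure-eigenvalue axis). Balancing these quantities over the axis direction and then minimizing over the admissible eigenvalue ratios — the extremal configuration being the analogue of $\mu_1\to 1$ in 2d, here the balanced indefinite form with $\lambda_1+\lambda_2+\lambda_3\to 0^-$ — is what should produce the stated half-angle with $\tan\theta=\beta=\sqrt{\tfrac16(3-\sqrt6)}$. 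I expect the bookkeeping of the degenerate cases, where one or two $\lambda_i$ vanish and one reverts to the $\vec d$-representation \eqref{seibold:eq:pos_stencil_function_d} as in the 2d case $(0-)$, to be routine but the most error-prone part; the genuinely delicate point is verifying that this inscribed-cone optimization is governed by the expected worst case and admits no smaller cone in an overlooked configuration.
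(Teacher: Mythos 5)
Your skeleton matches the paper's: normalize by reflections and permutations so that $\lambda_3<0$ and $c_i\ge 0$ (resp.\ $d_i\ge 0$), run a case analysis on the signs of $\lambda_1,\lambda_2$ (with the degenerate sign patterns handled via the $\vec{d}$-representation \eqref{seibold:eq:pos_stencil_function_d} or by reduction to the 2d cases), and in each case choose an orthant on which the linear terms are nonpositive so that $g$ is dominated by a purely quadratic expression. The divergence — and the gap — is in the decisive quantitative step. The paper does \emph{not} intersect the orthant with the true asymptotic cone $\{\sum_i\lambda_ix_i^2<0\}$ and then optimize the inscribed circular cone over the admissible eigenvalue ratios, as you propose. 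Instead, in the binding case $(++-)$ it uses only the crude individual bounds $\mu_1=|\lambda_1|/|\lambda_3|<1$ and $\mu_2=|\lambda_2|/|\lambda_3|<1$ to replace the $\lambda$-dependent quadric region by a single \emph{fixed} wedge $B=\{x_1,x_2>0,\;x_3<0,\;|x_1|,|x_2|<\sqrt{1/2}\,|x_3|\}$ on which $\tilde g<(\tfrac12(\mu_1+\mu_2)-1)|x_3|^2-(\cdots)<0$, and then inscribes the largest circular cone in this one concrete region $B$; that elementary geometric computation is what yields $\beta=\sqrt{\tfrac16(3-\sqrt6)}$ and $\vec{v}=\frac{1}{\sqrt{41-16\sqrt6}}\bigl(2(\sqrt3-\sqrt2),2(\sqrt3-\sqrt2),1\bigr)$. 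The remaining cases reuse this same cone: the all-nonpositive cases give a region containing $B$, the case $(+0-)$ gives $\{x_1,x_2,x_3<0,\,|x_1|<|x_3|\}\supset$ (a reflection of) $B$, and the case $(+--)$ needs the extra normalization $\mu_3\ge\tfrac12$ (available since $\mu_2+\mu_3>1$) before the same wedge works.

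Concretely, your proposal would not terminate where you expect it to: the optimization you describe — taking the worst case over $\vec\lambda$ of the region $\{\sum_i\lambda_ix_i^2<0\}$ restricted to the orthant — produces a \emph{larger} uniform region than $B$ (e.g.\ in case $(++-)$ the constraint $\mu_1+\mu_2<1$ gives the intersection $\{|x_1|,|x_2|<|x_3|\}$, not $\{|x_1|,|x_2|<\sqrt{1/2}\,|x_3|\}$), hence a larger inscribed cone and a different $\beta$; so the claim that this optimization ``should produce'' $\tan\theta=\sqrt{\tfrac16(3-\sqrt6)}$ is not substantiated and is in fact inconsistent with the paper's own remark that, unlike in 2d, the 3d constant is \emph{not} sharp precisely because of the intermediate domain $B$. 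To complete the proof as stated you need to supply that fixed intermediate wedge and the explicit inscribed-cone computation; the eigenvalue-ratio optimization you sketch is a harder, different calculation that would prove a stronger theorem with a different constant, and you have carried out neither.
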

\begin{proof}
The following cases need to be considered:
\begin{itemize}
\item \textbf{Cases $(---)$, $(0--)$, $(00-)$:} $\lambda_1\le 0$, $\lambda_2\le 0$,
$\lambda_3<0$ \\
As in 2d, we use representation
\eqref{seibold:eq:pos_stencil_function_d}. Define $\mu_i=|\lambda_i|\ \forall i=1,2,3$.
Allowing $\mu_1$ and $\mu_2$ to be zero, the domain $H_{\vec{\lambda},\vec{d}}$ is
defined by
\begin{equation*}
g(x_1,x_2,x_3) = d_1x_1+d_2x_2+d_3x_3-\mu_1x_1^2-\mu_2x_2^2-\mu_3x_3^2 \ge 0\;,
\end{equation*}
First assume that if one $\mu_i=0$, then the corresponding $d_i\neq 0$. Due to
symmetry we can w.l.o.g.~assume that $d_1,d_2,d_3\ge 0$. For all $\vec{x}\in B$,
where $B=\{(x_1,x_2,x_3)|x_1,x_2,x_3<0\}$, the function $g$ satisfies
\begin{equation*}
g(x_1,x_2,x_3) = -d_1|x_1|-d_2|x_2|-d_3|x_3|-\mu_2|x_2|^2-\mu_3|x_3|^2 < 0\;.
\end{equation*}
The domain $B$ is not a cone, but the corresponding domain from the case $(++-)$
is contained in it. Hence, the cone constructed in that case can be used here.

In the case that $\mu_i=0$ and $d_i=0$, the geometry reduces to the 2d (or trivial 1d)
case. Since in 2d the desired estimates have been shown for a larger opening angle,
the constructions transfer to the 3d case.
\item \textbf{Case $(++-)$:} $\lambda_1>0$, $\lambda_2>0$, $\lambda_3<0$ \\
Define $\mu_1 = \frac{|\lambda_1|}{|\lambda_3|}<1$ and
$\mu_2 = \frac{|\lambda_2|}{|\lambda_3|}<1$. Then $H_{\vec{\lambda},\vec{c}}$ is
defined by
\begin{equation*}
\tilde g(x_1,x_2,x_3) = \mu_1(x_1^2-2c_1x_1)
+\mu_2(x_2^2-2c_2x_2)-(x_3^2-2c_3x_3) \ge 0\;.
\end{equation*}
Due to symmetry we can assume $c_1,c_2,c_3\ge 0$. For all $\vec{x}\in B$,
where $B=\{(x_1,x_2,x_3)|x_1,x_2>0,x_3<0,|x_1|,|x_2|<\sqrt{\frac{1}{2}}|x_3|\}$,
we have
\begin{align*}
\tilde g(\vec{x})
&= \mu_1(|x_1|^2-2c_1|x_1|)+\mu_2(|x_2|^2-2c_2|x_2|)-(|x_3|^2+2c_3|x_3|) \\
&< (\tfrac{1}{2}(\mu_1+\mu_2)-1)|x_3|^2-2(\mu_1c_1|x_1|+\mu_2c_2|x_2|+c_3|x_3|) < 0\;.
\end{align*}
Note that $B$ is not a cone. However, a 3d cone can always be contained inside $B$.
Some geometric considerations yield that the cone with maximum opening angle contained
inside $B$ is given by $\beta=\sqrt{\frac{1}{6}(3-\sqrt{6})}$ and
$\vec{v}=\frac{1}{\sqrt{41-16\sqrt{6}}}\prn{2(\sqrt{3}-\sqrt{2}),2(\sqrt{3}-\sqrt{2}),1}$.
\item \textbf{Case $(+0-)$:} $\lambda_1>0$, $\lambda_2=0$, $\lambda_3<0$ \\
As in the preceding degenerate cases, we describe the domain by
\eqref{seibold:eq:pos_stencil_function_d}. Define $\mu_1=|\lambda_1|$ and
$\mu_3=|\lambda_3|$. The case $d_2=0$ reduces to the 2d case $(+-)$.
Hence, w.l.o.g.~we consider $d_1\ge 0$, $d_2>0$, $d_3\ge 0$. For all $\vec{x}\in B$,
where $B=\{(x_1,x_2,x_3)|x_1,x_2,x_3<0,|x_1|<|x_3|\}$, we have
\begin{equation*}
g(x_1,x_2,x_3) = -d_1|x_1|-d_2|x_2|-d_3|x_3|+\mu_1|x_1|^2-\mu_3|x_3|^2 < 0\;.
\end{equation*}
The estimate holds, since $\mu_1<\mu_3$ and $|x_1|<|x_3|$.
As before, a 3d cone with desired opening angle can be contained in $B$.
The cone from the case $(++-)$ can be used here.
\item \textbf{Case $(+--)$:} $\lambda_1>0$, $\lambda_2<0$, $\lambda_3<0$ \\
Define $\mu_2=\frac{|\lambda_2|}{|\lambda_1|}$ and
$\mu_3=\frac{|\lambda_3|}{|\lambda_1|}$. Since $\mu_2+\mu_3>1$, we assume
w.l.o.g.~$\mu_3\ge\frac{1}{2}$. The domain $H_{\vec{\lambda},\vec{c}}$ is defined by
\begin{equation*}
\tilde g(x_1,x_2,x_3)
= (x_1^2-2c_1x_1)-\mu_2(x_2^2-2c_2x_2)-\mu_3(x_3^2-2c_3x_3) \ge 0\;.
\end{equation*}
Due to symmetry we can assume $c_1,c_2,c_3\ge 0$. For all $\vec{x}\in B$,
where $B=\{(x_1,x_2,x_3)|x_1>0,x_2,x_3<0,|x_1|,|x_2|<\sqrt{\frac{1}{2}}|x_3|\}$
we have
\begin{align*}
\tilde g(\vec{x})
&= (|x_1|^2-2c_1|x_1|)-\mu_2(|x_2|^2+2c_2|x_2|)-\mu_3(|x_3|^2+2c_3|x_3|) \\
&= \underbrace{(|x_1|^2-\mu_2|x_2|^2-\mu_3|x_3|^2)}_{<(\frac{1}{2}-\mu_3)|x_3|^2\le 0}
-2(\mu_1c_1|x_1|+\mu_2c_2|x_2|+c_3|x_3|) < 0
\end{align*}
The domain $B$ is the same as in case $(++-)$, merely reflected at the $x_1,x_3$
plane. Hence, a 3d cone can be placed in the same way.\vspace{-5mm}
\end{itemize}
\end{proof}

\begin{rem}
Unlike the 2d case, the 3d estimate is not sharp, due to the intermediate domain $B$.
With significantly more algebra, it is possible to gain an opening angle that is a
couple of degrees larger.
\end{rem}

\begin{rem}
The existence of a positive stencil implies the existence of a stencil.
Configurations that yield an unsolvable Vandermonde system
\eqref{seibold:eq:linear_system} are automatically excluded by the cone criterion.
\end{rem}

\begin{defn}
\label{seibold:def:points_distributed_nicely}
We call points \emph{distributed nicely} around a central point, if in a test cone,
with opening angle given by Thm.~\ref{seibold:thm:pos_stencil_domain2d} respectively
Thm.~\ref{seibold:thm:pos_stencil_domain3d}, always points are contained, for any
possible direction the cone points to.
\end{defn}

\subsection{Condition on Point Cloud Geometry}
\label{subsec:mps_conditions_geometry}
The cone criterion guarantees positive stencils. We now provide conditions on the point
cloud geometry and the choice of candidate points, such that the cone criterion is
guaranteed to be satisfied. As in \cite{Levin1998}, we define
\begin{defn}
\label{seibold:def:mesh_size}
Let $\Omega\subset\mathbb{R}^d$ be a domain and $X=\{\vec{x}_1,\dots,\vec{x}_n\}$
a point cloud. The \emph{mesh size} $h$ is defined as the minimal real number, such
that $\bar\Omega\subset \bigcup_{i=1}^n \bar B\prn{\vec{x}_i,\frac{h}{2}}$,
where $\bar B\prn{\vec{x},r}$ is the closed ball of radius $r$ centered in $\vec{x}$
and $\bar\Omega$ is the closure of $\Omega$.
\end{defn}
We assume that a desired maximum mesh size is preserved by management of the point
cloud, e.g.~by inserting points into large holes.
\begin{thm}
\label{seibold:thm:pos_stencil_mesh_size}
Let the point cloud have mesh size $h$. Let $\gamma$ be the opening angle of the
cone derived in Thm.~\ref{seibold:thm:pos_stencil_domain2d} respectively
Thm.~\ref{seibold:thm:pos_stencil_domain3d}. If the radius of considered candidate
points satisfies $r > \frac{1}{\sin(\gamma/2)}\frac{h}{2}$, then for every interior
point which is sufficiently far from the boundary, a positive stencil exists.
\end{thm}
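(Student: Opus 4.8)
The plan is to reduce the claim to the cone criterion (Thm~\ref{seibold:thm:pos_stencil_domain2d} and Thm~\ref{seibold:thm:pos_stencil_domain3d}) through Def~\ref{seibold:def:points_distributed_nicely}: it suffices to show that, under the stated radius condition, the candidate points lying in $\bar B(\vec{x}_0,r)$ are distributed nicely around the interior point $\vec{x}_0$, i.e.~that every test cone of opening angle $\gamma$ emanating from $\vec{x}_0$ contains at least one candidate point. Once this is established, the sufficiency direction of the cone criterion immediately guarantees the existence of a positive stencil, since the cones $C_{\vec{v}}$ appearing there are exactly cones of opening angle $\gamma$ ranging over all directions.

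The mechanism linking the mesh size to the cone criterion is the elementary observation that, by Def~\ref{seibold:def:mesh_size}, every point of $\bar\Omega$ lies within distance $h/2$ of some cloud point; hence any closed ball $\bar B(\vec{p},h/2)\subset\bar\Omega$ must contain at least one cloud point. The strategy is therefore to inscribe, inside each test cone \emph{and} inside $\bar B(\vec{x}_0,r)$, a ball of radius $h/2$: the cloud point it captures is then automatically a candidate point lying inside the cone.

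First I would fix a direction $\vec{v}$ and place a trial ball centred on the cone axis at axial distance $d$ from $\vec{x}_0$. A ball tangent to the lateral cone surface has radius $d\sin(\gamma/2)$, so choosing $d=\tfrac{h/2}{\sin(\gamma/2)}$ produces an inscribed ball of radius exactly $h/2$, entirely inside the infinite cone. The radius condition $r>\tfrac{1}{\sin(\gamma/2)}\tfrac{h}{2}$ is precisely what pushes this construction inside $\bar B(\vec{x}_0,r)$, so that the captured cloud point lies at distance at most $r$ from $\vec{x}_0$ and qualifies as a candidate. Since $\vec{v}$ was arbitrary, every test cone then contains a candidate point, the candidates are distributed nicely, and the cone criterion yields a positive stencil.

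Two points require care. First, the hypothesis ``sufficiently far from the boundary'' is what ensures that each inscribed trial ball lies inside $\bar\Omega$ for every direction $\vec{v}$, so that the mesh-size covering of Def~\ref{seibold:def:mesh_size} can be invoked in all directions; near the boundary, cones pointing outward need not be filled by cloud points. Second --- and this is the main obstacle --- one must fit the radius-$h/2$ ball inside both the infinite cone and $\bar B(\vec{x}_0,r)$ simultaneously. A crude estimate demanding the \emph{entire} trial ball within radius $r$ gives the farthest-point bound $d+\tfrac{h}{2}\le r$ and hence the slightly larger requirement $r\ge\tfrac{h}{2}\bigl(1+\tfrac{1}{\sin(\gamma/2)}\bigr)$. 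Recovering the clean stated constant $\tfrac{1}{\sin(\gamma/2)}\tfrac{h}{2}$ requires either a sharper placement of the trial ball against the spherical cap cut out by $\partial B(\vec{x}_0,r)$, or absorbing the lower-order $O(h)$ term into the strict inequality together with the interior-distance hypothesis; pinning down this constant is the step most sensitive to the chosen construction.
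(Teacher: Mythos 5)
Your argument is essentially the paper's own proof: the paper also reduces the claim to the cone criterion and argues (contrapositively) that a test cone containing no candidate point would force an empty ball of radius $h/2$, contradicting the mesh-size covering of Def.~\ref{seibold:def:mesh_size}, with the constant read off from the triangle $(0,\vec{x},\vec{x}_i)$ formed by the origin, a trial point $\vec{x}$ on the cone axis, and the captured cloud point $\vec{x}_i$. The $O(h)$ discrepancy you flag between the stated bound $r>\frac{1}{\sin(\gamma/2)}\frac{h}{2}$ and the crude farthest-point bound $r\ge\frac{h}{2}\bigl(1+\frac{1}{\sin(\gamma/2)}\bigr)$ is not treated any more carefully in the paper's one-line proof, so your explicit accounting of where the clean constant must come from is, if anything, more complete than the original.
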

\begin{proof}
Having mesh size $h$ implies that there are no holes larger in diameter than $h$,
i.e.~$\forall\vec{x}\in\Omega\;\exists\vec{x}_i\in X:\|\vec{x}_i-\vec{x}\|<\frac{h}{2}$.
Fig.~\ref{seibold:fig_cone_meshsize} shows a ball with radius $r$ around the central
point and a cone with opening angle $\gamma$. If the cone contains no point, there
must be a ball of radius $\frac{h}{2}$ which contains no points. The claim follows by
considering the triangle ($0,\vec{x},\vec{x}_i$).
\end{proof}
The specific ratios of candidate radius to maximum hole size radius are
\begin{equation*}
\frac{r}{h/2} >
\sqrt{1+\tfrac{1}{\beta^2}} =
\begin{cases}
\sqrt{4+2\sqrt{2}} &= 2.61 \quad\mathrm{in~2d} \\
\sqrt{7+2\sqrt{6}} &= 3.45 \quad\mathrm{in~3d}
\end{cases}
\end{equation*}
Using sharper estimates, the 3d ratio can be lowered to $\sqrt{6+2\sqrt{6}} = 3.30$.
In practice, point clouds are much nicer than the worst case scenario, so significantly
smaller ratios lead to positive stencils.

\begin{figure}
\centering
\begin{minipage}[t]{.48\textwidth}
\centering
\includegraphics[width=0.9\textwidth]{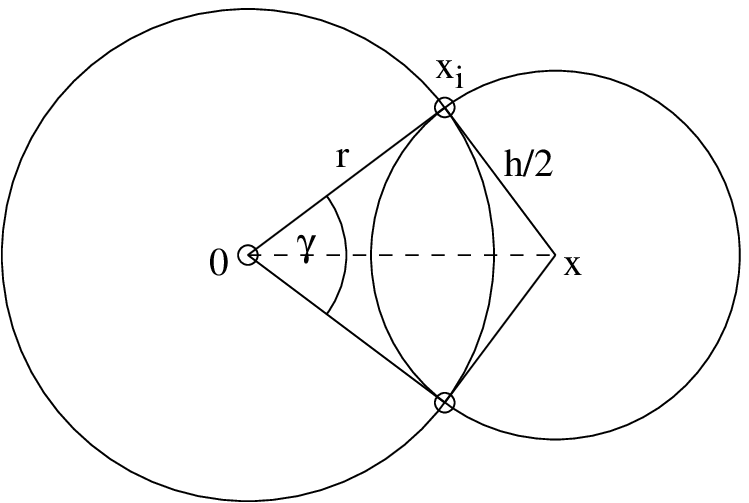}
\caption{Relation between neighborhood radius and mesh size}
\label{seibold:fig_cone_meshsize}
\end{minipage}
\hfill
\begin{minipage}[t]{.48\textwidth}
\centering
\includegraphics[width=0.9\textwidth]{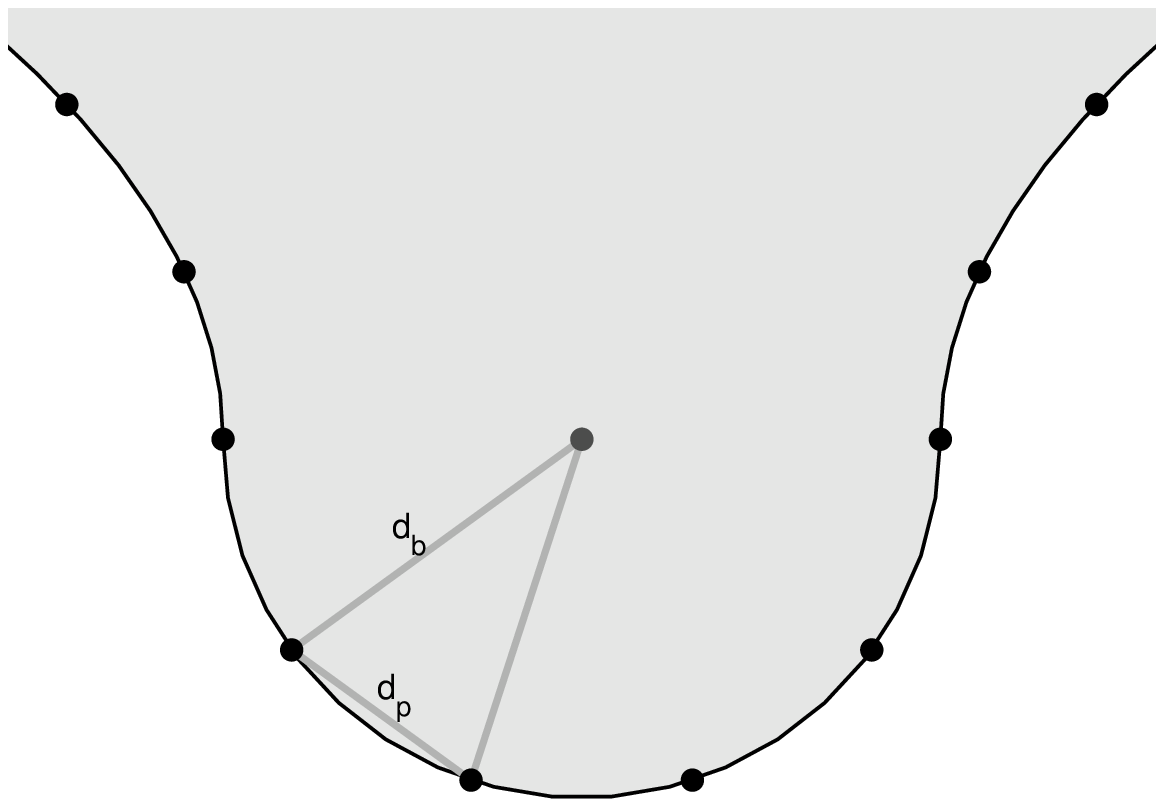}
\caption{Guaranteeing the cone criterion close to the boundary}
\label{seibold:fig_cone_boundary}
\end{minipage}
\end{figure}

Thm.~\ref{seibold:thm:pos_stencil_mesh_size} is valid for any interior point which
is far enough from the boundary, that the mesh size criterion guarantees points to
lie between the point in consideration and the boundary. For a layer of interior
points close to the boundary, the cone criterion can be enforced by the following
construction (see Fig.~\ref{seibold:fig_cone_boundary}):
First, place boundary points sufficiently dense. Let their maximum distance be
$d_\mathrm{p}$. Second, ensure that every interior point has a minimum distance
$d_\mathrm{b}$ from the boundary. In 2d, this is
$d_\mathrm{b}>\frac{4}{\pi}d_\mathrm{p}$.

\subsection{Neumann Boundary Points}
Assume the boundary $\partial\Omega$ is $C^1$ around Neumann boundary points.
Consider a local coordinate system, i.e.~$\vec{n}=(1,0)$ in 2d, respectively
$\vec{n}=(1,0,0)$ in 3d. We obtain Neumann stencils by solving the linear
minimization problem
\begin{equation}
\min \sum_{i=1}^m \frac{s_i}{w_i}, \ \mathrm{s.t.} \
V\cdot\vec{s}=\vec{n}, \ \vec{s}\ge 0\;,
\label{seibold:eq:mps_neumann_system}
\end{equation}
where the matrix $V$ is given by \eqref{seibold:eq:constraints_Neumann} as
\begin{equation*}
V = \prn{\begin{array}{ccc}
x_1 & \dots & x_m \\
y_1 & \dots & y_m
\end{array}}
\mbox{~in 2d, and~}
V = \prn{\begin{array}{ccc}
x_1 & \dots & x_m \\
y_1 & \dots & y_m \\
z_1 & \dots & z_m
\end{array}}
\mbox{~in 3d.}
\end{equation*}
We consider the 3d case. The 2d geometry is contained as a special case. For an
easier analysis, we consider a locally convex domain, i.e.~$x_i\ge 0\ \forall i$.
\begin{thm}
For a Neumann boundary point a positive stencil exists, iff the points' projections
onto the normal plane do not lie all in one and the same half space.
\end{thm}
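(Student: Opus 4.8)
The plan is to apply Farkas' lemma directly to the equality-constrained system $V\cdot\vec{s}=\vec{n}$, exactly as in Cor.~\ref{seibold:cor:pos_stencil_geometric_criterion} for the Laplace case, and then to exploit the convexity assumption $x_i\ge 0$ to push the dual condition down from $\mathbb{R}^k$ into the normal plane. Taking $A=V$ and $\vec{b}=\vec{n}=(1,0,0)^T$ in Farkas' lemma, a positive Neumann stencil fails to exist if and only if there is a vector $\vec{w}=(w_1,w_2,w_3)$ with $V^T\cdot\vec{w}\ge 0$ and $\vec{n}^T\cdot\vec{w}<0$; written out, this reads
\begin{equation*}
w_1 x_i + w_2 y_i + w_3 z_i \ge 0 \quad\forall i\;, \qquad w_1 < 0\;.
\end{equation*}
The first row of $V$ (the normal direction) is singled out by the right-hand side $\vec{n}$, while the remaining rows encode the projections $\vec{p}_i=(y_i,z_i)$ of the neighbors onto the normal plane.

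Next I would reduce this dual condition to a statement about the $\vec{p}_i$ alone. Setting $\vec{v}=(w_2,w_3)$ and using $w_1<0$ together with the convexity assumption $x_i\ge 0$, the inequalities become $\vec{v}\cdot\vec{p}_i \ge -w_1 x_i = |w_1|\,x_i \ge 0$, which is strict whenever $x_i>0$. Hence no positive stencil exists precisely when there is a direction $\vec{v}$ in the normal plane that is strictly positive on the projections of all genuinely interior neighbors and non-negative on the rest, i.e.~when all projections lie to one side of a line through the origin. For the converse I would start from such a separating $\vec{v}$ and recover an admissible $\vec{w}$: the scale $|w_1|$ may be chosen as $\min_{i:\,x_i>0}(\vec{v}\cdot\vec{p}_i)/x_i$, which is positive exactly because $\vec{v}\cdot\vec{p}_i>0$ on those points, so that $\vec{v}\cdot\vec{p}_i\ge|w_1|x_i$ holds for every $i$ and $w_1=-|w_1|<0$. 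Taking the contrapositive then yields the claim: a positive stencil exists iff the projections cannot be confined to a single half space of the normal plane.

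The main obstacle, and the point deserving care, is the boundary between strict and non-strict inequality, caused by neighbors lying in the tangent plane ($x_i=0$). Such points project onto the bounding line itself and contribute nothing to the normalization $\sum_i s_i x_i=1$, so they may carry positive weight while satisfying $\vec{v}\cdot\vec{p}_i=0$. This is exactly why the relevant notion is strict containment (an open half space) for the interior neighbors rather than a closed half space: a configuration whose projections all satisfy $\vec{v}\cdot\vec{p}_i\ge 0$, but with two tangent-plane points projecting to opposite rays of the bounding line, still admits a positive stencil. The fully degenerate case in which \emph{every} neighbor lies in the tangent plane renders the first constraint $\sum_i s_i x_i=1$ unsolvable and is handled separately. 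Once this distinction is fixed, the equivalence follows from Farkas' lemma and the separating-hyperplane description of the origin relative to the convex hull of the projections $\{\vec{p}_i\}$.
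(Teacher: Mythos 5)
Your proof is correct and follows essentially the same route as the paper: Farkas' lemma applied to $V\cdot\vec{s}=\vec{n}$, then reduction of the dual inequalities to the projections $(y_i,z_i)$ via the convexity assumption $x_i\ge 0$, with the dual certificate recovered from a separating direction by an explicit choice of the scale $|w_1|$ in the converse. You are in fact more careful than the paper about neighbors with $x_i=0$ and the fully degenerate all-tangent case (which the paper's ``w.l.o.g.'' glosses over), though your illustrative side claim that two tangent-plane points on opposite rays of the bounding line always force a positive stencil to exist is not quite right --- if every neighbor with $x_i>0$ projects strictly into the open half plane, the original direction still furnishes a Farkas certificate and no positive stencil exists.
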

\begin{proof}
Due to Farkas' lemma, system \eqref{seibold:eq:mps_neumann_system} has no solution
$\vec{s}\ge 0$, iff the system $V^T\cdot\vec{w}\ge 0$ has a solution satisfying
$w_x<0$, where $\vec{w}=(w_x,w_y,w_z)^T$.

Let no positive stencil exist. Then $\vec{w}\in\mathbb{R}^d$ with $w_x<0$ exists,
such that $V^T\cdot\vec{w}\ge 0$, i.e.~$w_xx_i+w_yy_i+w_zz_i\ge 0 \ \forall i$.
This is equivalent to 
$\vec{k}\cdot\prn{\begin{array}{c}y_i\\z_i\end{array}}\ge x_i \ \forall i$,
where $\vec{k}=(\frac{w_y}{|w_x|},\frac{w_z}{|w_x|})$.
This means that the $y$-$z$ projection of all points lies in one and the same half
space (in the direction of $\vec{k}$).

Conversely, assume that the $y$-$z$ projection of all points lies in one and the
same half space. Let $I$ be the indices of all points in consideration. Define
$I_p=\{i\in I: \ x_i>0\}$. Consider w.l.o.g.~the case $z_i\ge 0 \ \forall i$, where
$z_i>0 \ \forall i\in I_p$. Choose
$\vec{w}=\prn{-1,0,\frac{\max_{i\in I} x_i}{\min_{i\in I_p} z_i}}$.
Then for all $i\in I_p$ it holds
\begin{equation*}
\vec{w}^T\cdot\vec{x}_i = -x_i+\frac{\max_{i\in I} x_i}{\min_{i\in I_p} z_i}z_i
\ge -x_i+\max_{i\in I} x_i\ge 0\;,
\end{equation*}
and for all $i\in I\setminus I_p$ one has $\vec{w}^T\cdot\vec{x}_i\ge 0$,
since $x_i=0$. Thus, no positive stencil exists.
\end{proof}
\begin{rem}
\label{seibold:rem:Neumann_first_order}
Construction \eqref{seibold:eq:mps_neumann_system} yields a first order accurate
approximation of the normal derivative. Second order accuracy could be achieved,
by including quadratic terms into $V$. However, in this case no positive stencil
exists, since the condition $\sum_{i=1}^m (x_i^2+y_i^2)s_i = 0$ cannot be satisfied.
\end{rem}

\subsection{Treatment of Cracks}
A non-convex part of the domain $\Omega$ requires a special treatment if it is thinner
than the local neighborhood radius. Fig.~\ref{seibold:fig_crack} shows a proper
treatment of a crack. Stencils on one side of the crack must not use points on the
other side. In the MPS method this property can be guaranteed by the following
construction: For a central point $\vec{x}_0\in\Omega$, only circular neighbors inside
the star shaped core
$\bar{\Omega}_{\vec{x}_0} =\{\vec{x}\in\bar{\Omega}:
(1-\alpha)\vec{x}_0+\alpha\vec{x}\in\bar{\Omega}\ \forall\alpha\in [0,1]\}$
(bold dots in Fig.~\ref{seibold:fig_crack}) are considered as candidates for the
linear minimization \eqref{seibold:eq:linear_minimization}.
If the domain is defined implicitly $\Omega = \{\vec{x}:\phi(\vec{x})<0\}$,
the point $\vec{x}$ does not lie in $\bar{\Omega}_{\vec{x}_0}$ if a point
$\vec{y}\in [\vec{x}_0,\vec{x}]$ on the connection line satisfies $\phi(\vec{y})>0$.

\begin{figure}
\centering
\begin{minipage}[t]{.39\textwidth}
\centering
\includegraphics[width=0.99\textwidth]{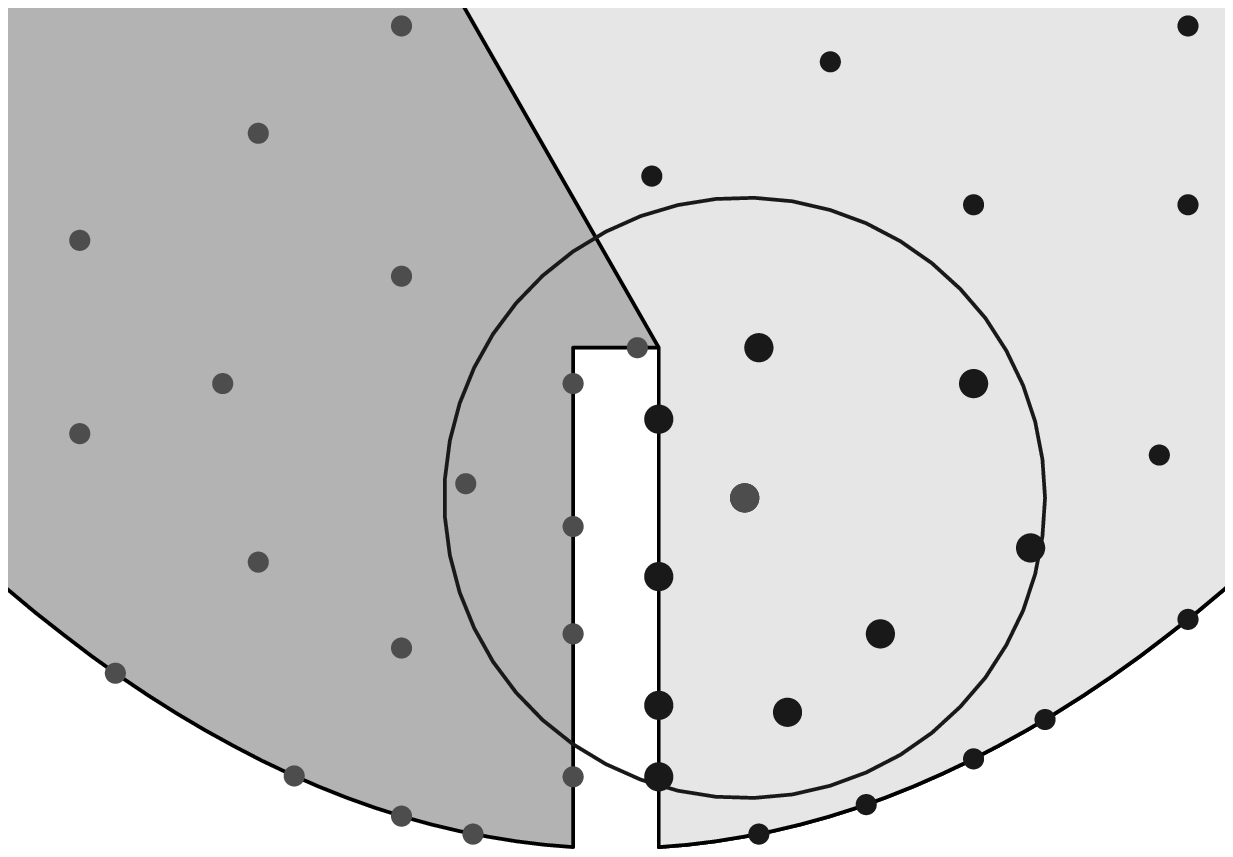}
\caption{Proper treatment of a crack}
\label{seibold:fig_crack}
\end{minipage}
\hfill
\begin{minipage}[t]{.58\textwidth}
\centering
\includegraphics[width=0.5\textwidth]{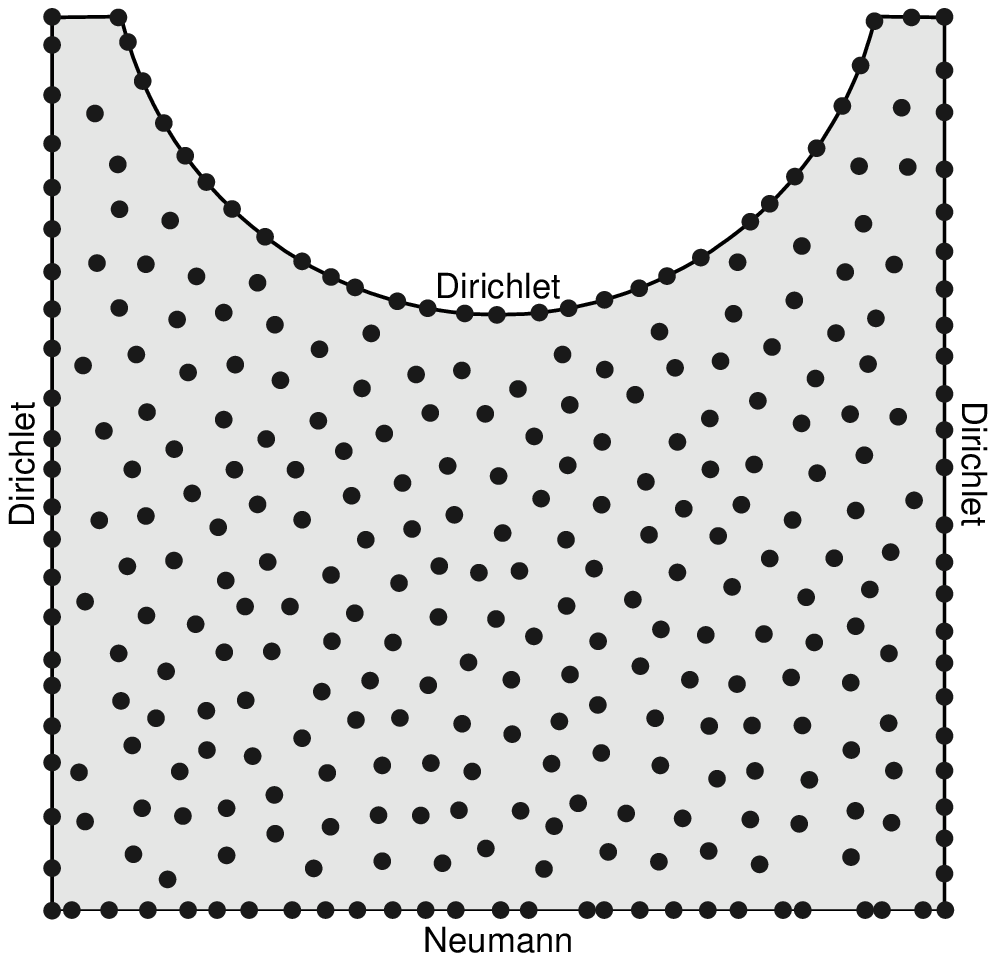}
\hfill
\includegraphics[width=0.42\textwidth]{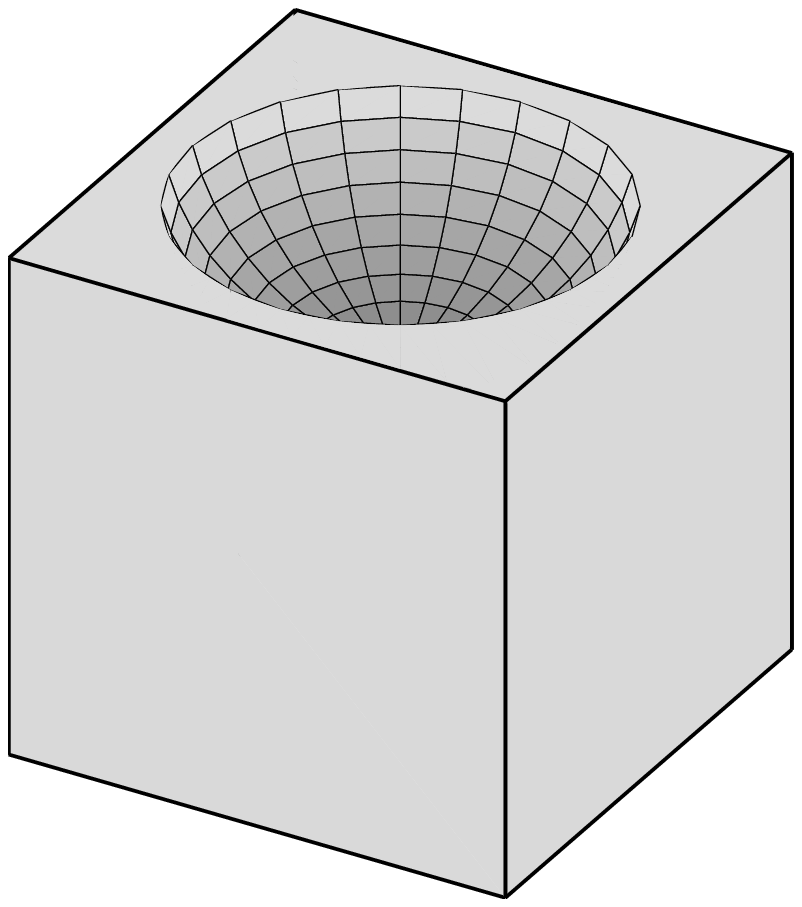}
\caption{Computational domain in 2d and 3d}
\label{seibold:fig_domain}
\end{minipage}
\end{figure}

\section{Minimal Stencils and Matrix Connectivity}
\label{seibold:sec:lp_matrix_connectivity}
Due to Thm.~\ref{seibold:thm:ess_irred_is_ess_diagdom} and
Thm.~\ref{seibold:thm:Lmatrix_is_Mmatrix}, the matrix composed of positive stencils
is an M-matrix, if every interior and Neumann boundary point is connected to a
Dirichlet boundary point.
\begin{thm}
\label{seibold:thm:mps_reach_boundary}
Consider the Poisson problem \eqref{seibold:eq:poisson_equation} on a domain which has
no holes (i.e.~only an outer boundary). With a MPS discretization every interior point
is connected to a boundary point.
\end{thm}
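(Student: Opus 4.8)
The plan is to argue by contradiction through the graph $G(A)$ of the assembled matrix. Recall that an edge $(i,j)\in G(A)$ means that the stencil of the interior point $\vec{x}_i$ assigns a nonzero — and, by positivity of the MPS stencil, strictly positive — weight to $\vec{x}_j$. First I would introduce the set $U$ of all interior points that are \emph{not} connected to any boundary point, and show it is closed under taking stencil neighbors. Indeed, if $\vec{x}_i\in U$ and $(i,j)\in G(A)$, then $\vec{x}_j$ cannot be a boundary point (otherwise $i$ would be connected to the boundary in a single step) and cannot itself be connected to the boundary (otherwise $i$ would be too); hence $\vec{x}_j\in U$. Thus every point of $U$ is an interior point carrying a consistent positive stencil whose entire support lies inside $U$. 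The goal is then to show $U=\emptyset$.

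The second step is a moment lemma valid for any consistent stencil, independent of how the candidate set was chosen. Fix a direction $\vec{e}\in\mathbb{R}^d$ and write $\vec{\bar x}_j=\vec{x}_j-\vec{x}_0$ for the central point $\vec{x}_0$. Projecting the linear constraint in \eqref{seibold:eq:constraints_laplace} onto $\vec{e}$ gives $\sum_j s_j\,(\vec{e}\cdot\vec{\bar x}_j)=0$, where the sum runs over the support $s_j>0$. Hence either every support point satisfies $\vec{e}\cdot\vec{\bar x}_j=0$, or some support point has $\vec{e}\cdot\vec{\bar x}_j>0$. The first alternative is impossible: projecting the quadratic constraint onto $\vec{e}$ yields $\sum_j s_j\,(\vec{e}\cdot\vec{\bar x}_j)^2=\vec{e}^{\,T}(2I)\vec{e}=2\|\vec{e}\|^2>0$, which cannot vanish. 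Therefore every interior stencil possesses, for each direction $\vec{e}$, a support point $\vec{x}_j$ with $\vec{e}\cdot\vec{x}_j>\vec{e}\cdot\vec{x}_0$ strictly. This is the support-level sharpening of the necessary criterion in Thm.~\ref{seibold:thm:mps_pos_stencil_necessary}.

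The final step combines the two. Since the domain has no holes and the cloud is finite, I fix a direction $\vec{e}$ pointing toward the outer boundary and choose $\vec{x}^\ast\in U$ maximizing $\vec{e}\cdot\vec{x}$. By closedness of $U$, every support point of the stencil at $\vec{x}^\ast$ lies in $U$ and hence obeys $\vec{e}\cdot\vec{x}_j\le\vec{e}\cdot\vec{x}^\ast$; but the moment lemma produces a support point with $\vec{e}\cdot\vec{x}_j>\vec{e}\cdot\vec{x}^\ast$, a contradiction. Equivalently, one may phrase this as a monotone march: from any interior point follow an edge to a strictly larger value of $\vec{e}\cdot\vec{x}$; strict monotonicity rules out cycles and finiteness forces termination, and the walk can only stop at a boundary point, which is precisely the asserted connection. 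Hence $U=\emptyset$. The step I expect to be most delicate is the bookkeeping that confines the stencil \emph{support} — not merely the candidate set — to $U$, together with the use of the quadratic moment to exclude the degenerate case in which all support points lie on a hyperplane; the no-holes hypothesis is what guarantees that the extremal direction $\vec{e}$ actually reaches the outer boundary rather than circling an interior void.
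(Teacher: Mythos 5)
Your proof is correct and follows essentially the same route as the paper: you form the set of points not connected to the boundary, observe that stencils of such points are supported inside that set, and contradict the half-space necessary condition (Thm.~\ref{seibold:thm:mps_pos_stencil_necessary}) at an extremal point --- your maximizer of $\vec{e}\cdot\vec{x}$ is exactly the paper's convex-hull vertex, and your ``moment lemma'' is a self-contained, support-level re-derivation of that necessary condition from the constraints \eqref{seibold:eq:constraints_laplace}. The extra care you take in restricting the argument to the stencil \emph{support} rather than the candidate set is a point the paper's proof passes over implicitly, but it does not change the approach.
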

\begin{proof}
Assume there is a point $i\in I$ which is not connected to a boundary point.
Define $I_i=\{j\in I:\mbox{$i$ is connected to $j$}\}$. Every point in $I_i$ is not
connected to a boundary point. Hence, the set $I_i\subset I$ forms an island
inside $\Omega$ which does not reach a the boundary. Consider a point that spans the
convex hull of $I_i$. It only uses points in its stencil that lie inside the island,
hence these lie in one and the same half space, which contradicts the necessary
condition on positive stencils given by Thm.~\ref{seibold:thm:mps_pos_stencil_necessary}.
\end{proof}
\begin{rem}
Although Thm.~\ref{seibold:thm:mps_reach_boundary} does not extend to interior
boundaries, in practice the MPS works for these, given enough boundary points are
placed.
\end{rem}
It remains to ensure that every point connects to a \emph{Dirichlet} boundary point.
Unfortunately, this cannot be concluded from the MPS method directly. It is possible
that an isolated Dirichlet point is not used in the stencils of nearby interior points. 
Note that this phenomenon can also happen on regular grids. A single Dirichlet point
in a corner of a domain may not be used by regular five-point stencils.
If Dirichlet data is prescribed only in small regions, these regions have to be
equipped with a sufficient number of boundary points. In addition, the MPS
implementation has to ensure that these Dirichlet points are used by nearby points.
If done so, the MPS method guarantees to generate M-matrices.

\begin{figure}
\centering
\begin{minipage}[t]{.49\textwidth}
\centering
\includegraphics[width=0.99\textwidth]{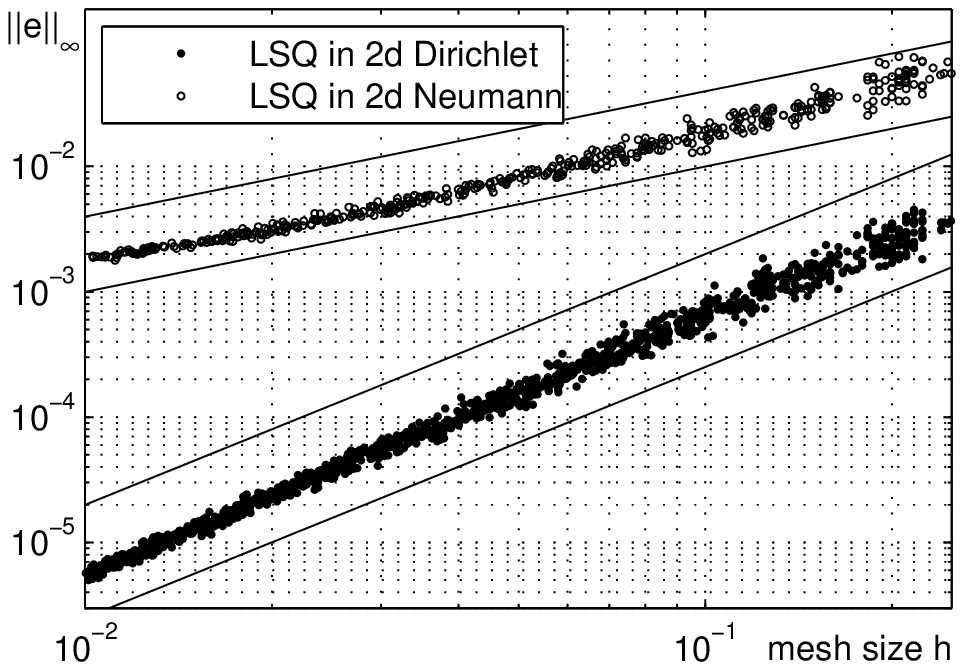}
\end{minipage}
\hfill
\begin{minipage}[t]{.49\textwidth}
\centering
\includegraphics[width=0.99\textwidth]{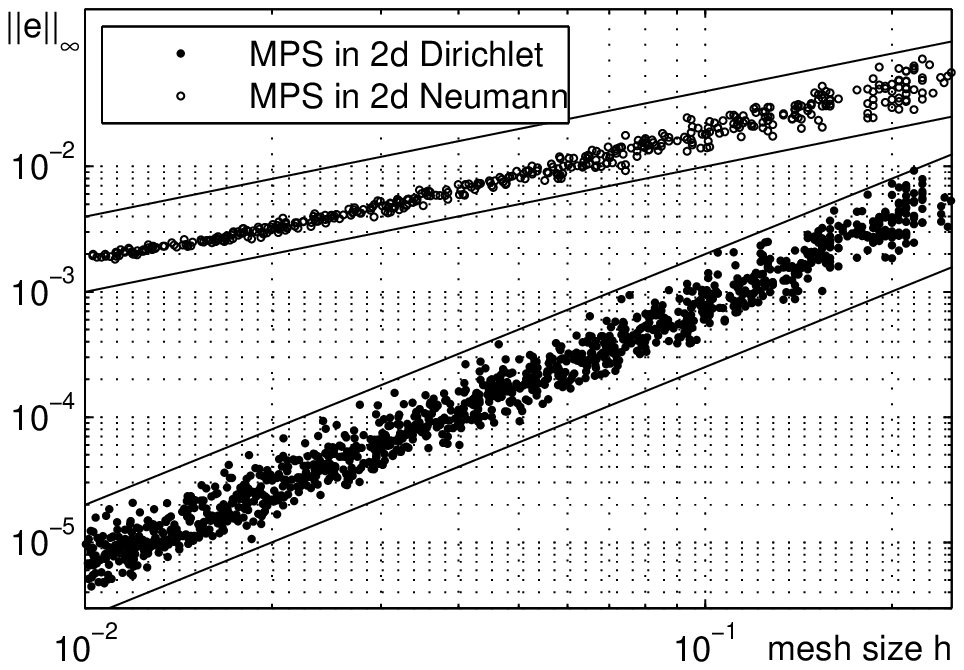}
\end{minipage}
\caption{Error convergence in 2d -- LSQ vs.~MPS method}
\label{seibold:fig_error_analysis_2d}
\end{figure}

\section{Numerical Experiments}
\label{seibold:sec:numerics}
We investigate the numerical accuracy of the MPS method in comparison to a least
squares approach. As test problems we consider the Poisson equation
\eqref{seibold:eq:poisson_equation} in the unit box with a ball cut out
$\Omega=[0,1]^d \setminus B(\prn{\frac{1}{2},\dots,\frac{1}{2},1.1},0.44)$.
Fig.~\ref{seibold:fig_domain} shows the computational domain in 2d and 3d.
In one case, the boundary conditions are Dirichlet everywhere, in the other case,
Neumann at the bottom $x_d = 0$, and Dirichlet everywhere else. Given $g$, we set
$f=\Delta g$ and $h = \pd{g}{n}$, so \eqref{seibold:eq:poisson_equation} has the
solution $u=g$. Specifically, we choose
$g(x_1,x_2) = \tfrac{1}{c_2}\prn{x_1\sin(x_2+2)+x_2\sin(2x_1+1)}$ in 2d and
$g(x_1,x_2,x_3) = \tfrac{1}{c_3}\prn{x_1\sin(x_2+2)+x_2\sin(2x_3+3)+x_3\sin(3x_1+1)}$
in 3d, with $c_2$ and $c_3$ such that $\max g-\min g=1$.
The problem is discretized by a sequence of point clouds. The point clouds have a
uniform average density and a minimum separation \cite{Levin1998} of $\delta=0.05$.
Each point cloud is managed to satisfy the conditions for the existence of positive
stencils, as derived in Sect.~\ref{subsec:mps_conditions_geometry}. Since to one mesh
size $h$, given by Def.~\ref{seibold:def:mesh_size}, many point clouds exist, we
sample a number of experiments to obtain an average error convergence rate.
To every point cloud we apply a weighted least squares method
(Sect.~\ref{seibold:sec:least_squares_approaches}) and the MPS methods
(Sect.~\ref{seibold:sec:lp_approach}), both with $w(\delta)=\delta^{-4}$.

\begin{figure}
\centering
\begin{minipage}[t]{.49\textwidth}
\centering
\includegraphics[width=0.99\textwidth]{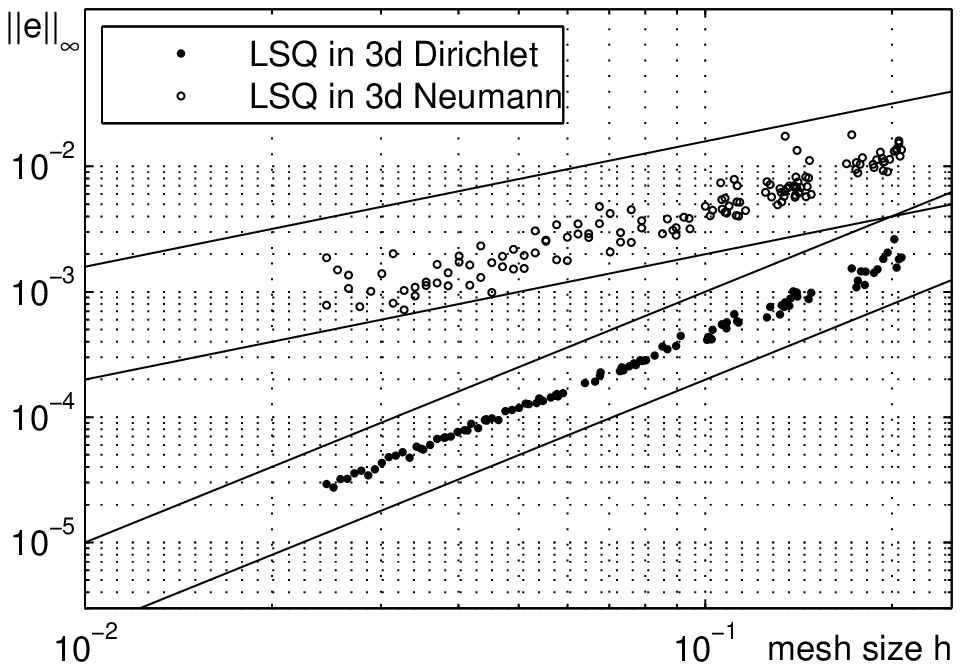}
\end{minipage}
\hfill
\begin{minipage}[t]{.49\textwidth}
\centering
\includegraphics[width=0.99\textwidth]{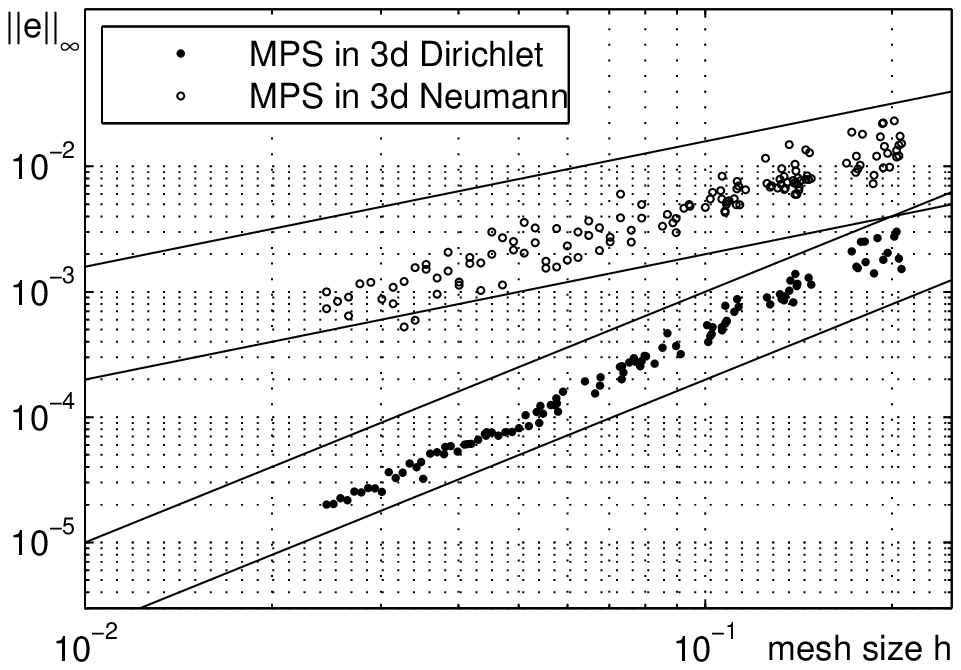}
\end{minipage}
\caption{Error convergence in 3d -- LSQ vs.~MPS method}
\label{seibold:fig_error_analysis_3d}
\end{figure}

The numerical results are shown in
Fig.~\ref{seibold:fig_error_analysis_2d} for 2d, and
Fig.~\ref{seibold:fig_error_analysis_3d} for 3d.
Plotted is the error measured in the maximum norm over the mesh size $h$.
Solid dots represent the all Dirichlet version of a problem, while open circles show
the error with partial Neumann boundary conditions. The reference lines are of slope
one and two respectively. One can observe the following:
\begin{itemize}
\item
Both approaches show a second order convergence rate for the pure Dirichlet problem,
and first order convergence if Neumann boundary conditions are involved.
While the derivation in Sect.~\ref{seibold:subsec:consistent_derivatives} enforces
only a first order accurate approximation of the Laplacian at interior points,
point clouds tend to possess enough averaged symmetry to actually yield second order
error convergence. On the other hand, the first order accurate approximation
(Rem.~\ref{seibold:rem:Neumann_first_order}) at Neumann boundary points carries
through.
\item
The MPS method shows a larger variation in error over the ensemble of
experiments. One reason for this effect could be the discontinuous dependence on the
point positions (see Rem.~\ref{seibold:rem:no_continuous_dependence}).
For two similar point clouds, the MPS method may select very different stencils.
\item
Both methods yield roughly the same error constant. The average error is slightly
lower with the MPS method.
\end{itemize}

\subsection{Computational Cost}
For a 3d test problem, the MPS method is compared with the LSQ method in terms of
computational cost. For a sequence of point clouds, Fig.~\ref{seibold:fig_cputimes}
shows the CPU times for the setup of the system matrix (left plot), the solution
of the arising system with a BiCG scheme (center plot), and the solution with an
algebraic multigrid (AMG) scheme (right plot). The latter is performed using
SAMG \cite{SAMG}
by the \emph{Fraunhofer Institute for Algorithms and Scientific Computing}.

\begin{figure}
\centering
\begin{minipage}[t]{.32\textwidth}
\centering
\includegraphics[width=0.99\textwidth]{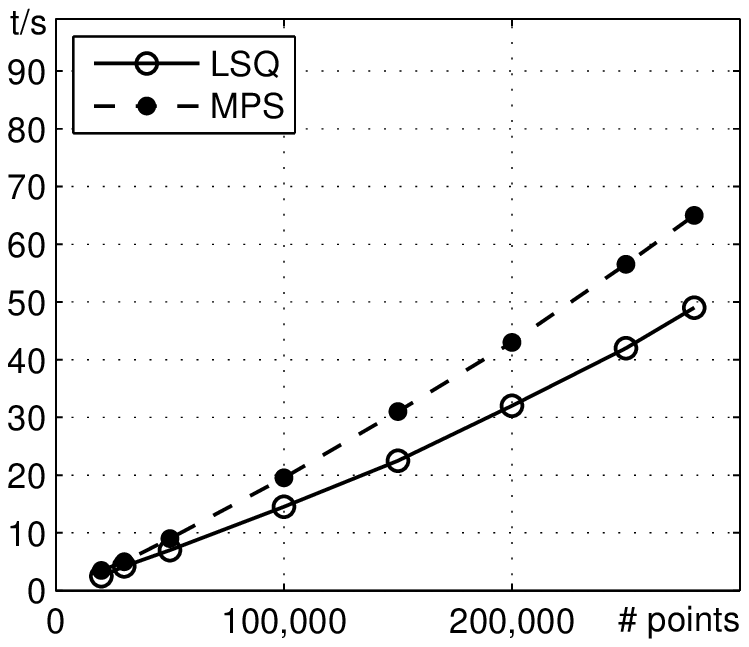}
\end{minipage}
\hfill
\begin{minipage}[t]{.32\textwidth}
\centering
\includegraphics[width=0.99\textwidth]{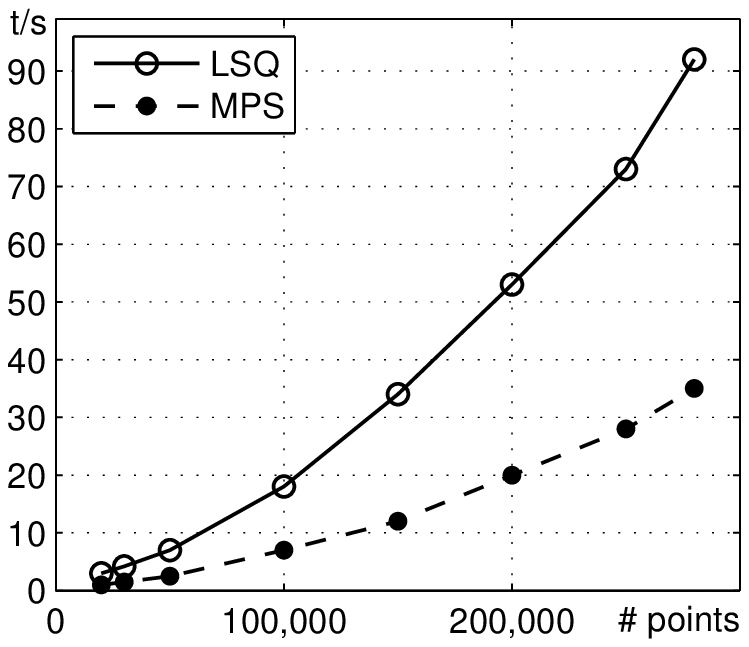}
\end{minipage}
\hfill
\begin{minipage}[t]{.32\textwidth}
\centering
\includegraphics[width=0.99\textwidth]{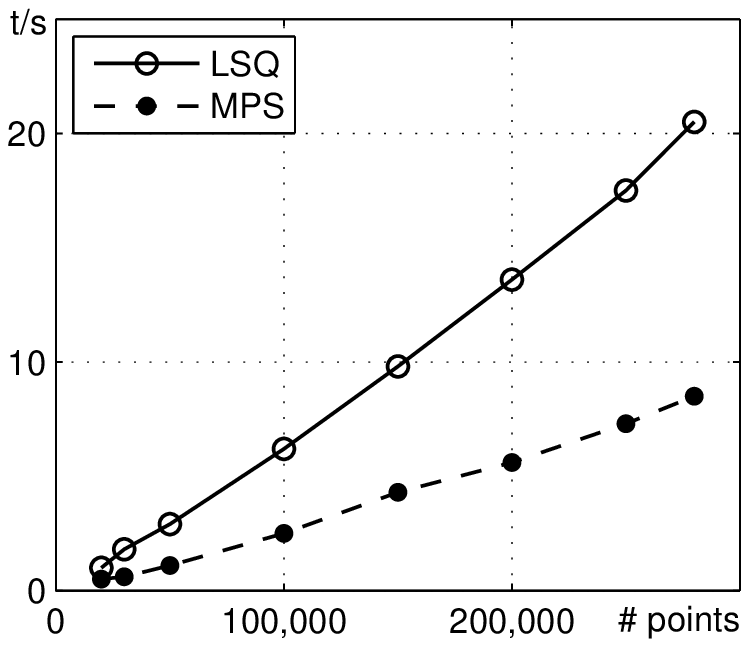}
\end{minipage}
\caption{CPU times for setup and solve by BiCG and AMG}
\label{seibold:fig_cputimes}
\end{figure}

As expected (Sect.~\ref{subsec:solving_lp}), the cost of setting up the system matrix
is roughly equal for MPS and LSQ method. In fact, MPS is slightly slower with the used
simplex method. However, more efficient linear programming methods may turn the tide
towards the MPS method. On the other hand, the cost for solving the large linear system
is significantly reduced by the MPS method. The speedup factor equals the factor in
sparsity, i.e.~the MPS approximation does not modify the convergence rate. While the
AMG solver shows a cost roughly linear in the number of unknowns, solvers further away
from optimal effort (like BiCG) will greatly benefit from the sparsity of the MPS
approximation as the number of unknowns increases.

\section{Conclusions and Outlook}
We have presented a meshfree approach that constructs minimal positive stencils for the
Laplace operator on a cloud of points. We have shown that under moderate assumptions
on the local resolution of the point cloud, positive stencils always exist.
The method approximates the Poisson equation by M-matrices, which are optimally sparse.
Both properties are beneficial, and they are in general not met by classical least
squares approaches. Numerical tests show that with the presented approach, the
approximation error roughly equals the error of classical approaches. The computational
cost to construct the new approximation is comparable to the cost of least squares
methods. On the other hand, for solving the arising linear system, both cost and memory
requirements are reduced significantly due to the optimal sparsity.

An efficient solution of the linear programs is a crucial point in the presented method,
worth a deeper analysis. The application to particle methods shall be investigated.
While the presented approach can stand as a method of its own, it may also increase the
efficiency of other approaches. The minimal stencils can be augmented by additional
neighbors and the final stencil be computed by a least squares method. For instance, a
local neighborhood radius can be based on the farthest minimal stencil point, thus
increasing sparsity and adding local adaptivity to existing meshfree codes.

\section*{Acknowledgments}
We would like to thank
Axel Klar, J{\"o}rg Kuhnert, Sven Krumke, Helmut Neunzert, and Sudarshan Tiwari
for helpful discussions and comments.
The support by the National Science Foundation is acknowledged.
The author was partially supported by NSF grant DMS--0813648.

\bibliographystyle{plain}
\bibliography{meshfree_poisson.bib}

\end{document}